\newtheorem{thm}{Theorem}
\newtheorem{cor}{Corollary}
\theoremstyle{definition}
\newtheorem{definition}[thm]{Definition}
\newtheorem{prop}{Proposition}[section]
\newtheorem{problem}{Problem}
\newtheorem{theorem}{Theorem}
\newtheorem{lemma}{Lemma}
\newtheorem{exe}{Exercise}
\newtheorem{exa}{Example}
\newtheorem{question}{Question}
\newtheorem{conjecture}{Conjecture}
\newcommand{\blem}{\begin{lemma}}
\newcommand{\elem}{\end{lemma}}
\newcommand{\bexer}{\begin{exe}}
\newcommand{\eexer}{\end{exe}}
\newcommand{\beq}{\begin{eqnarray}}
\newcommand{\eeq}{\end{eqnarray}}
\newcommand{\bthm}{\begin{theorem}}
\newcommand{\ethm}{\end{theorem}}
\newcommand{\beg}{\begin{exa}}
\newcommand{\eeg}{\end{exa}}
\newcommand{\bdefe}{\begin{definition}}
\newcommand{\edefe}{\end{definition}}
\newcommand{\bprop}{\begin{prop}}
\newcommand{\eprop}{\end{prop}}
\newcommand{\bpf}{\begin{proof}}
\newcommand{\epf}{\end{proof}}
\def\be{\begin{equation}}
\def\ee{\end{equation}}
\newtheorem{cl}{Claim}
\newcommand{\bcl}{\begin{cl}}
\newcommand{\ecl}{\end{cl}}
\newcommand{\B}{\mathbb{B}}
\newcommand{\R}{\mathbb{R}}
\newcommand{\C}{\mathbb{C}}
\newcommand{\re}{\mathop{\mathrm{Re}\,}}
\newcommand{\bc}{\mathbf{B}}
\newcommand{\s}{\mathbf{S}}
\newcommand{\bquess}{\begin{questions}}
\newcommand{\equess}{\end{questions}}
\newcommand{\br}{\begin{rem}}
\newcommand{\er}{\end{rem}}
\newcommand{\brs}{\begin{remarks}}
\newcommand{\ers}{\end{remarks}}
\newcommand{\bn}{\begin{nonsec}}
\newcommand{\en}{\end{nonsec}}
\newcommand{\bfig}{\begin{figure}}
\newcommand{\efig}{\end{figure}}
\newcommand{\bstate}{\begin{statement}}
\newcommand{\estate}{\end{statement}}
\newcommand{\bcor}{\begin{cor}}
\newcommand{\ecor}{\end{cor}}
\newcommand{\bprob}{\begin{problem}}
\newcommand{\eprob}{\end{problem}}
\newcommand{\bcon}{\begin{conjecture}}
\newcommand{\econ}{\end{conjecture}}
\newcommand{\bques}{\begin{question}}
\newcommand{\eques}{\end{question}}
\newcommand{\begs}{\begin{examples}}
\newcommand{\eegs}{\end{examples}}
\newcommand{\bdefes}{\begin{definitions}}
\newcommand{\edefes}{\end{definitions}}
\newcommand{\bsol}{\begin{solution}}
\newcommand{\esol}{\end{solution}}
\newcommand{\bexers}{\begin{exercises}}
\newcommand{\eexers}{\end{exercises}}
\newcommand{\ba}{\begin{array}}
\newcommand{\ea}{\end{array}}
\newcommand{\beqq}{\begin{eqnarray*}}
\newcommand{\eeqq}{\end{eqnarray*}}
\newcommand{\bee}{\begin{enumerate}}
\newcommand{\eee}{\end{enumerate}}
\newcommand{\bei}{\begin{itemize}}
\newcommand{\eei}{\end{itemize}}
\newcommand{\bed}{\begin{description}}
\newcommand{\eed}{\end{description}}
\newcommand{\bo}{\begin{obser}}
\newcommand{\eo}{\end{obser}}
\newcommand{\bos}{\begin{obsers}}
\newcommand{\eos}{\end{obsers}}
\newcommand{\IR}{{\mathbb R}}
 \newcommand{\Sp}{{\mathbb S}}
\numberwithin{equation}{section}
\begin{document}
\author{M. Mateljevi\'c, N. Mutavd\v{z}i\'c  and  B. Purti\'c }
\title[Schwarz]{$T_\alpha$-harmonic functions in higher dimensions}
\maketitle

\section{Introduction}

This a very rough version  for temporarily use  and it is a small part  of a   project.

In \cite{Liu2004}, Liu and Peng  introduced the following differential operators on $\B^n$ the unit ball in $\R^n$, $n\geq 2$ and $\gamma\in \R$.

\begin{equation}\label{gamaLap}
\Delta_\gamma=(1-|x|^2)\left\{\ \frac{1-|x|^2}{4}\sum_j \frac{\partial^2}{\partial x_j^2} +\gamma \sum_j x_j \frac{\partial}{\partial x_j}+\gamma\left( \frac{n}{2}-1-\gamma \right)  \right\}.
\end{equation}

Through this article we will use more convenient notation

\begin{equation}\label{alfaLap}
T_{\alpha}u(x)=(1-|x|^2)\Delta u(x)+2 \alpha \langle x,\nabla u(x)\rangle   +
(n-2-\alpha) \alpha  u(x).
\end{equation}

Here, $|x|<1$ and $\alpha>-1$. Also, for $x=x(x_1,x_2,\ldots,x_n)\in \mathbb{R}^n$ we use $|x|=\sqrt{x_1^2+x_2^2+\ldots+x_n^2}, \nabla =(\frac{\partial}{\partial x_1},\frac{\partial}{\partial x_2},\ldots,\frac{\partial}{\partial x_n}),\Delta=\frac{\partial^2}{\partial x_1^2}+\frac{\partial^2}{\partial x_2^2}+\ldots+\frac{\partial^2}{\partial x_n^2}.$

The purpose of this paper is to investigate a Dirichlet problem, corresponding to above mentioned PDE. We will specificaly consider non-homogenous boundary value problem. In that purpose explicit form of Green function assosiated to the operator (\ref{alfaLap}) will be calculated, and also, we will present the corresponding representation theorem.

Theorem 1.1. \cite{Liu2004} Let $f\in C^\infty(S_{n-1}),  f \neq 0$. Then the solution $u$ to the Dirichlet
problem
is in $C^\infty(\overline{\mathbb{B}}_n)$ if and only if one of the following occurs:
\begin{itemize}
\item[(H1)] $\gamma$    is a nonnegative integer;

\item[(H2)] the data $f$    has a finite spherical harmonic expansion.

\end{itemize}


These operators appear in a natural way when we transplant the Weinstein equation from the upper half-plane $\{x_n>0\}$ to the unit ball of $\R^n$ via M\"obius transformations. see \cite{Akin,Heinz1} \\

A {\bf complex-ball} counterparts of these operators, the laplacians  on $\bc$, the unit ball of $\C^n$, $n\geq 1$, with $\alpha,\beta\in \C$,

$$\Delta_{\alpha,\beta}=(1-|z|^2)\left\{ \sum_{i,j} (\delta_{ij}-z_i\overline{z}_j) \frac{\partial^2}{\partial z_i \partial \overline{z}_j}+\alpha \sum_{j} z_j\frac{\partial}{\partial z}_j +
\beta \sum_{j} \overline{z}_j\frac{\partial}{\partial \overline{z}_j}-\alpha\beta \right\}.$$

introduced in Geller \cite{Geller} and have been considered by many authors; see, e.g., \cite{Ahern,AhernCas}.\\

\begin{enumerate}
    \item If $\alpha=\beta=0$, $\Delta_{0,0}$ is the invariant laplacian or {\it Bergman laplacian}. The functions annihilated by $\Delta_{0,0}$ are called invariant harmonic functions of $\mathcal{M}$-harmonic, see Rudin \cite[Chapter 4]{Rudin}.
    \item If $\alpha=\beta$, $\Delta_{\alpha,\alpha}$ is the laplacian with respect to the weighted Bergman metric, with weight $(1-|z|^2)^\alpha$.
\end{enumerate}
Thus $(\alpha,\beta)$-harmonic functions can be seen as generalized $\mathcal{M}$-harmonic functions. \\

With $\Delta_{\alpha,\beta}$ there is associated kernel
$$P_{\alpha,\beta}(z,\xi)=c_{\alpha,\beta} \frac{(1-|z|^2)^{n+\alpha+\beta}}{(1-z\overline{\xi})^{n+\alpha}(1-\xi\overline{z})^{n+\beta}}, \quad z\in \bc, \xi \in \s,$$
where
$$c_{\alpha,\beta}=\frac{\Gamma(n+\alpha)\Gamma(n+\beta)}{\Gamma(n)\Gamma(n+\alpha+\beta)}.$$

Now we recall   some considerations from  \cite{Ahern}.
Introduce the fundamental solution for$\Delta_{\alpha,\beta}$, i.e., the function $G_{\alpha,\beta}(z)=g_{\alpha,\beta}(|z|^2)$ which  plays  the role of Green's  function in the classical potential theory. We look for a   radial function which is   $(\alpha,\beta)$-harmonic  away from zero.
They found  $$ G_{\alpha,\beta}(z)= d_{\alpha,\beta} (1- |z|^2)^{n+\alpha + \beta }    F(n+\beta,n+\alpha;n+\alpha + \beta +1;1- |x|^2),$$

with  $$ d_{\alpha,\beta}=\frac{\Gamma (n+\alpha)\Gamma (n+\beta)}{\pi^n\Gamma (n+\alpha + \beta +1 )} .$$

In \cite{Ahern}, for $u\in C^2(\overline{B}^n)$  the following Riesz decomposition formula is obtained
\begin{align} \label{repF}
u(z)&=\int_{\s} P_{\alpha,\beta}(z,\zeta)\varphi(\zeta) d\sigma(\zeta) +\\  \notag +&\int_{B^n} \Delta_{\alpha,\beta} u(\omega) G_{\alpha,\beta}(z,\omega) (1-\overline{z}\omega )^\alpha (1-z\overline{\omega})^\beta (1-|\omega|^2)^{-\alpha-\beta}d\lambda(\omega)
\end{align}
with $ G_{\alpha,\beta}(z,\omega)= g_{\alpha,\beta}(|\varphi_z(\omega)|^2 )=G_{\alpha,\beta}(\varphi_z(\omega))$.

Write the version of this formula with  $r\overline{B}^n)$ instead of    $\overline{B}^n)$.
Strictly speaking, formula (\ref{repF}) has only been obtained for  $u\in  C^2(\overline{B}^n)$, but
it can be seen to hold under more general conditions ??. For instance, it holds if
 $u\in C^2({B}^n) \cap C(\overline{B}^n)$ and

\be
\int_{B^n} |\Delta_{\alpha,\beta} u(\omega)|  \frac{dV(\omega)}{1- |\omega|} < + \infty.
\ee
 $dV$  denotes Lebesgue measure.

 Note that from  definition $G_{\alpha,\beta}$ it follows  that   (i)  $G_{\alpha,\beta}(z)\approx  (1- |z|^2)^{n+\alpha + \beta }$   if   $|z|\rightarrow 1$ and we use (i) to derive
formula (\ref{repF}).
\subsection{hyperbolic harmonic }
Dirchet problem is well understood   for smooth metrics.  For example, one can see Chapter IX of \cite{shen-yau}.
It turns out that this problem for hyperbolic  metric on the unit  ball with boundary data on the unit sphere is very interesting and it  is considered recently in \cite{ChenRas}. Here the metric density  goes to  $\infty $ near the boundary.
Among the other things,  J. Chen, M. Huang, A. Rasila and X. Wang used nice properties of M\"{o}bius transformation and   hyperbolic  Green function of  the unit  ball (described for example  in \cite{ahlMob,stoll}) and integral estimate.
Recall that hyperbolic Laplace operator in the $n$-dimensional hyperbolic ball
$\mathbb{B}^n$ for $n\geq 2$ is defined as\vspace{-3mm}
$$ \Delta_{h}u(x)= (1-|x|^2)^2\Delta u(x)+2(n-2)(1-|x|^2)\sum_{i=1}^{n}
x_{i} \frac{\partial u}{\partial x_{i}}(x).\vspace{-2mm} $$


This can be stated as follows: \\
Let us consider the following Dirichlet boundary problem
\be\label{eqHypDir}\left\{
\begin{array}{ll}
 u(x)=\phi(x),        & \hbox{if} \,\, x\in \mathbb{S}^{n-1}, \\
(\Delta_h)u(x)=\psi, & \hbox{if}  \,\,  x\in \mathbb{B}^n .
\end{array}
\right.\vspace{-2mm}\ee



Set $0\leq r<t\leq 1$ and define \vspace{-2mm}
\be
g(r,t):=\int_r^t \frac{(1-t^2)^{n-2}}{t^{n-1}}d t,\quad g(r)=g(r,1).\vspace{-2mm}
\ee


Then hyperbolic Green function is given by\vspace{-2mm}
\be
G_h(x,y)=g(|T_y x|)=g\left(\frac{|x-y|}{[x,y]}\right).
\ee
The Poisson-Szego kernel $P_h$ for hyperbolic Laplacian  $\Delta_h$ is given by\vspace{-1mm}
\be
P_h (x, t) = \left(\frac{1 - |x|^2|}{|t - x|^2}\right)^{n-1}.\vspace{-1mm}
\ee

Let us define hyperbolic Poisson's integral\vspace{-2mm}
\be
P_h [f](x) =
\int_{\mathbb{S}^{n-1}}P_h (x, t) f(t) d S (t).\vspace{-2mm}
\ee
for $f\in L^1(\mathbb{S}^{n-1})$ and, also, the hyperbolic Green integral\vspace{-2mm}
$$G_{h}[\psi](x)=  \int_{\mathbb{B}^n} G_{h}(x,y) \psi(y) \frac{d V(x)}{(1-|x|^2)^n}, \vspace{-2mm}$$
for appropriate functions $\psi$.

\begin{theorem}\cite{ChenRas}\label{ChenRas1}
Suppose that $u \in  C^2(\mathbb{B}^n , \mathbb{R}^n ) \cap C(\overline{\mathbb{B}^n} , \mathbb{R}^n)$ for  $n\geq 3$ and\vspace{-3mm}
$$\int_{\mathbb{B}^n}(1 {-} |x|^2)^{n{-}1}|\psi(x)| d\tau (x) {\leq}\mu_1, \mbox{ where }\mu_1 > 0\mbox{ is a constant. }\hspace{1cm}\vspace{-2mm}$$
If u satisfies (\ref{eqHypDir}), then\vspace{-2mm}

$$u =
\int_{\mathbb{S}^{n-1}}P_h (x, t) f(t) d\sigma (t) - \int_{\mathbb{B}^n} G_{h}(x,y) \psi(y) \frac{d V(x)}{(1-|x|^2)^n}.$$

\
\end{theorem}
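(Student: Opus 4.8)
\medskip
\noindent\textbf{Proof proposal.} The plan is to fix $x\in\mathbb{B}^n$, apply on each subball $B_r:=\{y\in\mathbb{R}^n:|y|<r\}$ with $|x|<r<1$ the analogue of the representation \eqref{repF} for the elliptic operator $\Delta_h$ (the ``$r\overline{\mathbb{B}}^n$-version'' requested above), and then let $r\uparrow1$. On $\overline{B_r}$ the operator $\Delta_h$ is smooth, uniformly elliptic (its principal part $(1-|x|^2)^2\Delta$ has symbol bounded below by $(1-r^2)^2$) and has no zeroth order term, so standard potential theory on the smooth bounded domain $B_r$, applied to $u\in C^2(\overline{B_r})$, gives
\be\label{eq:rep-r}
u(x)=\int_{\partial B_r}P_h^{(r)}(x,\zeta)\,u(\zeta)\,d\sigma_r(\zeta)-\int_{B_r}G_h^{(r)}(x,y)\,\psi(y)\,d\tau(y),
\ee
where $G_h^{(r)}\ge 0$ is the Green function of $\Delta_h$ on $B_r$ and $P_h^{(r)}(x,\cdot)\,d\sigma_r$ the associated harmonic measure, a probability measure on $\partial B_r$; for $r=1$ these are $G_h$ and $P_h(x,t)\,dS(t)$.

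First I would treat the volume term. By domain monotonicity of Green functions, $\mathbf{1}_{B_r}(y)\,G_h^{(r)}(x,y)\uparrow G_h(x,y)$ as $r\uparrow1$. From $G_h(x,y)=g(|T_yx|)$, the identity $1-|T_yx|^2=\dfrac{(1-|x|^2)(1-|y|^2)}{[x,y]^2}$ (equivalent to $[x,y]^2=|x-y|^2+(1-|x|^2)(1-|y|^2)$), and the elementary asymptotics $g(t)\approx c_n(1-t)^{n-1}$ as $t\to1^-$ read off from the definition of $g$, one gets for fixed $x$ that $G_h(x,y)\approx_x(1-|y|^2)^{n-1}$ as $|y|\to1$, hence $G_h(x,y)\,d\tau(y)\lesssim_x(1-|y|^2)^{-1}\,dV(y)$ near $\mathbb{S}^{n-1}$; near the diagonal $y=x$ one uses instead that $G_h(x,\cdot)\in L^1_{\mathrm{loc}}(d\tau)$ (its singularity is $\approx|x-y|^{2-n}$) while $\psi$ is locally bounded. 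Thus the hypothesis $\int_{\mathbb{B}^n}(1-|y|^2)^{n-1}|\psi(y)|\,d\tau(y)\le\mu_1$ is exactly what makes $\int_{\mathbb{B}^n}G_h(x,y)|\psi(y)|\,d\tau(y)<\infty$. Writing $\psi=\psi_+-\psi_-$ and applying monotone convergence to each piece, $\int_{B_r}G_h^{(r)}(x,\cdot)\,\psi\,d\tau\to G_h[\psi](x)$.

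Next I would handle the boundary term. Identifying $\partial B_r$ with $\mathbb{S}^{n-1}$ by $\zeta\mapsto\zeta/r$, I claim the probability measures $P_h^{(r)}(x,\cdot)\,d\sigma_r$ converge weak-$*$ to $P_h(x,t)\,dS(t)$. Indeed, for $\varphi\in C(\mathbb{S}^{n-1})$ the function $P_h[\varphi]$ is $\Delta_h$-harmonic on $\mathbb{B}^n$ and continuous on $\overline{\mathbb{B}^n}$ with boundary values $\varphi$ (a known boundary property of the Poisson--Szeg\H{o} kernel $P_h$); applying \eqref{eq:rep-r} to $P_h[\varphi]$, whose $\Delta_h$ vanishes, and letting $r\uparrow1$ yields $\int_{\partial B_r}P_h^{(r)}(x,\cdot)\,P_h[\varphi]\to P_h[\varphi](x)$, i.e.\ the claimed convergence tested against continuous data on the sphere. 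Since $u\in C(\overline{\mathbb{B}^n})$ with $u|_{\mathbb{S}^{n-1}}=f$, for any $\varepsilon>0$ we have $|u(y)-f(y/|y|)|<\varepsilon$ once $|y|$ is close to $1$, so (the $P_h^{(r)}(x,\cdot)$ being probability measures) $\int_{\partial B_r}P_h^{(r)}(x,\cdot)\,u$ differs from $\int_{\partial B_r}P_h^{(r)}(x,\cdot)\,f(\cdot/|\cdot|)$ by at most $\varepsilon$ for $r$ near $1$, and the latter tends to $P_h[f](x)$. Letting $r\uparrow1$ in \eqref{eq:rep-r} then gives $u(x)=P_h[f](x)-G_h[\psi](x)$, and $x$ was arbitrary.

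The step I expect to be the main obstacle is pinning down \eqref{eq:rep-r} with the correct Green and Poisson kernels for $\Delta_h$ and, inside the limit, the weak-$*$ convergence of the harmonic measures of $B_r$ to $P_h(x,\cdot)\,dS$; this is where one really exploits conformal/M\"{o}bius invariance and the explicit structure of $g$, whereas the volume-term limit is routine once the matching asymptotic $G_h(x,y)\approx_x(1-|y|^2)^{n-1}$ has been recorded against the integrability hypothesis. As an alternative one could bypass \eqref{eq:rep-r}: set $v:=P_h[f]-G_h[\psi]$, verify directly that $v\in C^2(\mathbb{B}^n)\cap C(\overline{\mathbb{B}^n})$ with $\Delta_h v=\psi$ and $v|_{\mathbb{S}^{n-1}}=f$ — the vanishing of $G_h[\psi]$ at the boundary again using the hypothesis on $\psi$ — and then invoke the maximum principle for the zeroth-order-free elliptic operator $\Delta_h$ to conclude $u-v\equiv0$.
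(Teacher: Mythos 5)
Your main route is sound and it is genuinely different from the one the paper points to. The paper does not reprove this theorem: it cites \cite{ChenRas} and indicates the proof there rests on Stoll's explicit identity (\ref{HypBr}), i.e.\ a Green-type formula on $r\mathbb{B}^n$ built from the concrete radial kernel $g(|x|,r)$ (which vanishes on $|x|=r$), after which M\"obius invariance of $\Delta_h$ transfers the formula from the origin to an arbitrary point and one lets $r\to1$ using the integrability hypothesis and $u\in C(\overline{\mathbb{B}^n})$; this is exactly the template the paper later imitates for $T_\alpha$ in (\ref{TaBr}). You instead run abstract elliptic potential theory on the exhausting balls $B_r$: the Green function and harmonic measure of the (uniformly elliptic, zeroth-order-free) operator $\Delta_h$ on $B_r$, domain monotonicity plus monotone convergence for the volume term, and weak-$*$ convergence of the harmonic measures, proved by testing against $P_h[\varphi]$ and using that $P_h$ solves the Dirichlet problem for continuous data. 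Your matching of the hypothesis with the Green kernel ($g(t)\approx(1-t)^{n-1}$, $1-|T_yx|^2=(1-|x|^2)(1-|y|^2)/[x,y]^2$, local singularity $|x-y|^{2-n}$) is correct and is the same quantitative heart as in \cite{ChenRas}. What the explicit route buys is that the kernel $g(|x|,r)$ and its normalization come out of the computation with no further identification needed; what your route buys is that it avoids M\"obius transformations and the construction of the auxiliary kernel, at the price of more black-box machinery.

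Two caveats. First, the one substantive step you assert rather than prove is the identification $\lim_{r\uparrow1}G_h^{(r)}(x,\cdot)=g(|T_\cdot x|)$ \emph{with the correct normalization}: domain monotonicity gives convergence to the minimal Green function of $\mathbb{B}^n$ relative to $d\tau$, but to conclude it equals the paper's $g(|T_yx|)$ on the nose (and hence that the representation has no stray multiplicative constant on the Green term) you must check that $g(|T_yx|)$ is $\Delta_h$-harmonic in $y\neq x$, vanishes at $\mathbb{S}^{n-1}$, and has the correctly normalized singularity at $y=x$, then invoke uniqueness; this is precisely what the explicit computation in (\ref{HypBr}) delivers for free. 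Second, your alternative ending (set $v=P_h[f]-G_h[\psi]$ and apply the maximum principle) has a genuine gap: under the sole hypothesis $\int_{\mathbb{B}^n}(1-|y|^2)^{n-1}|\psi(y)|\,d\tau(y)<\infty$ it is not automatic that $G_h[\psi]$ extends continuously to $\overline{\mathbb{B}^n}$ with boundary values $0$ (pointwise bounds such as $|\psi(x)|\le M(1-|x|^2)$, as in the Lipschitz theorem quoted later in the paper, are what make such boundary behaviour easy), so the comparison $u\equiv v$ cannot be run without additional argument. Since you present that only as an alternative, the primary argument stands.
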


The authors of \cite{ChenRas} proved the above theorem by using
\begin{theorem}\cite[Lemma 3.2]{stoll} If the function $u$ is of the class $C^2(\mathbb{B}^n)$ then
    \begin{equation}
        \label{HypBr}
        u(0)=\int_{\mathbb{S}^{n-1}}u(r\zeta)d S(\zeta)+\int_{r\mathbb{B}^n}g(|x|,r)\Delta_h u(x)\frac{d V(x)}{(1-|x|^2)^n}
    \end{equation}
    for all $0<r<1$.
\end{theorem}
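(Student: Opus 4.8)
The plan is to reduce the identity to a one‑variable computation for the spherical means of $u$, the only input beyond elementary calculus being that $g(\,\cdot\,,r)$ is the radial fundamental solution of $\Delta_h$ on $r\mathbb{B}^n$ with pole at the origin. First I would record that $\Delta_h$ is in divergence form: a direct computation gives, for $u\in C^2(\mathbb{B}^n)$,
$$\frac{\Delta_h u(x)}{(1-|x|^2)^n}\;=\;\operatorname{div}\!\big((1-|x|^2)^{2-n}\,\nabla u(x)\big),$$
which just says that $\Delta_h$ is, up to the conformal factor $(1-|x|^2)^n$, the Laplace--Beltrami operator of the hyperbolic metric, hence self‑adjoint with weight $w(x)=(1-|x|^2)^{2-n}$ against $d\tau(x):=(1-|x|^2)^{-n}\,dV(x)$. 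Next I would analyse radial functions: for $\phi(|x|)$ one has, with $\rho=|x|$, $\Delta_h\phi=(1-\rho^2)^2\big(\phi''+\tfrac{n-1}{\rho}\phi'\big)+2(n-2)(1-\rho^2)\rho\,\phi'$, and solving $\Delta_h\phi=0$ for $\rho>0$ yields $\phi'(\rho)=c\,(1-\rho^2)^{n-2}\rho^{-(n-1)}$. Hence $x\mapsto g(|x|,r)$ is $\Delta_h$‑harmonic on $0<|x|<r$, vanishes on $|x|=r$, and near the origin behaves like $\tfrac1{n-2}|x|^{-(n-2)}$ (like $\log\tfrac1{|x|}$ when $n=2$); in either case it is $d\tau$‑integrable over $r\mathbb{B}^n$ against the bounded function $\Delta_h u$.

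With these in hand, set $m(\rho)=\int_{\mathbb{S}^{n-1}}u(\rho\zeta)\,dS(\zeta)$ for $0\le\rho<1$. Differentiating under the integral, rewriting $m'(\rho)$ as a flux integral over $\{|x|=\rho\}$ and applying the divergence theorem to $w\nabla u$ on $\rho\mathbb{B}^n$ (legitimate since $u\in C^2(\overline{\rho\mathbb{B}^n})$ as $\rho<1$, and $w\equiv(1-\rho^2)^{2-n}$ is constant on the sphere and so factors out), one obtains
$$m'(\rho)\;=\;\frac{(1-\rho^2)^{n-2}}{\omega_{n-1}\,\rho^{n-1}}\int_{\rho\mathbb{B}^n}\Delta_h u\;d\tau ,$$
$\omega_{n-1}$ being the surface area of $\mathbb{S}^{n-1}$. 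Since $u$ is continuous at $0$ we have $m(\rho)\to u(0)$ as $\rho\to0$, and the right‑hand side is integrable near $\rho=0$ because $\int_{\rho\mathbb{B}^n}\Delta_h u\,d\tau=O(\rho^{n})$ while the prefactor is $O(\rho^{-(n-1)})$. Integrating $m'$ from $0$ to $r$ and then swapping the order of integration (everything is absolutely integrable by the previous paragraph),
$$u(0)=m(r)-\frac1{\omega_{n-1}}\int_{r\mathbb{B}^n}\Big(\int_{|y|}^{r}(1-\rho^2)^{n-2}\rho^{-(n-1)}\,d\rho\Big)\Delta_h u(y)\,d\tau(y)=m(r)-\frac1{\omega_{n-1}}\int_{r\mathbb{B}^n} g(|y|,r)\,\Delta_h u(y)\,d\tau(y),$$
which is the asserted representation once the normalization of the surface measure on $\mathbb{S}^{n-1}$ (and the corresponding constant absorbed into $g$, resp.\ the sign convention matching the Riesz‑type decomposition of Theorem~\ref{ChenRas1}) is fixed as in \cite{stoll}.

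An essentially equivalent route, closer in spirit to the Green‑function computations of this paper, is to apply Green's second identity to the pair $\big(u,\;g(|x|,r)\big)$ on the punctured ball $\{\varepsilon<|x|<r\}$, using the divergence form of $\Delta_h$ and the harmonicity of $g(\,\cdot\,,r)$ there: the outer sphere $|x|=r$ contributes the spherical mean $m(r)$ (because $g(r,r)=0$), while the inner sphere $|x|=\varepsilon$ contributes $u(0)$ in the limit $\varepsilon\to0$ (since $w\,\partial_\nu g(|x|,r)$ equals exactly $\varepsilon^{-(n-1)}$ there and the $g\,\partial_\nu u$ term is $O(\varepsilon)$). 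In either approach the one delicate point is the behaviour at the origin — one needs the exact order of the singularity of $g(\,\cdot\,,r)$ and the boundedness of $\nabla u$ and $\Delta u$ near $0$ (this is precisely where $u\in C^2$ enters) to justify the passage to the limit, respectively Fubini's theorem; the divergence identity, the radial ODE, and the bookkeeping of the constants $\omega_{n-1}$ and $n-2$ are then routine.
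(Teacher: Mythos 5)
Your proof is correct, and both of the routes you sketch work; the only caveat is bookkeeping, on which see below. Be aware, however, that the paper does not prove this statement at all: it is quoted from \cite[Lemma 3.2]{stoll} and used as a black box, so the comparison can only be with the analogous identities the paper does prove for $T_\alpha$, namely the Green-type identity \eqref{a-GrFla} and its consequences \eqref{GrinBr} and \eqref{DirRes0}. Those are obtained by your second route (Stokes/Green's identity on $r\mathbb{B}^n$, respectively on the annulus $\varepsilon<|x|<r$, applied to $u$ and the radial fundamental solution), and that route is the one that generalizes to $T_\alpha$, where the zeroth-order term $(n-2-\alpha)\alpha u$ prevents the operator from being a pure weighted divergence (cf.\ the extra term in \eqref{TaBr}). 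Your first route, via the ODE for the spherical means $m(\rho)$ and Fubini, is more elementary for $\Delta_h$ because it avoids the punctured-ball limit, and your treatment of the origin ($g(\rho,r)\sim \rho^{2-n}/(n-2)$, so $m'(\rho)=O(\rho)$) is exactly the point where $u\in C^2$ is needed.

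One point you relegate to a parenthesis should be made explicit: what your computation actually yields (with normalized surface measure $d\sigma$) is $u(0)=\int_{\mathbb{S}^{n-1}}u(r\zeta)\,d\sigma(\zeta)-\frac{1}{\omega_{n-1}}\int_{r\mathbb{B}^n}g(|y|,r)\,\Delta_h u(y)\,(1-|y|^2)^{-n}\,dV(y)$, i.e.\ with a minus sign in front of the Green term. That sign cannot be absorbed into any normalization of the positive measures or of $g\ge 0$: testing with $u(x)=|x|^2$, which satisfies $\Delta_h u>0$, rules out the plus sign, and the minus sign is the one consistent with the sub-mean-value property and with Theorem~\ref{ChenRas1}. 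So your derived identity is the correct reading of \eqref{HypBr} as printed; since your argument already determines both the constant and the sign, it would strengthen the write-up to state them explicitly rather than deferring to the conventions of \cite{stoll}.
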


It is important to note that function $g(|x|,r)$ vanishes for $|x|=r$.

In the proof of the main result we will use appropriate analog of the function $g$ in the case of $T_\alpha$ Laplacian.

\subsection{Foundamental solution}








The hypergeometric function is a solution of Euler's hypergeometric differential equation

    $${\displaystyle H_{abc}w=z(1-z){\frac {d^{2}w}{dz^{2}}}+\left[c-(a+b+1)z\right]{\frac {dw}{dz}}-ab\,w=0.} $$

which has three regular singular points: $0,1$ and $\infty$.

Question.  What are solution  of  Euler's hypergeometric differential equation  $H_{abc}w=0$  with singular point at $0$.

Convergence of  Hypergeometric function

In \url{https://en.wikipedia.org/wiki/Hypergeometric_function}, we can find:
Around the point $z = 0$, two independent solutions are, if $c$ is not a non-positive integer,

    $$ {\displaystyle \,_{2}F_{1}(a,b;c;z)}$$

and, on condition that $c$ is not an integer,

   $${\displaystyle z^{1-c}\,_{2}F_{1}(1+a-c,1+b-c;2-c;z)} .$$

   If $c$ is a non-positive integer $1-m$, then the first of these solutions does not exist and must be replaced by ${\displaystyle z^{m}F(a+m,b+m;1+m;z).}$ The second solution does not exist when $c$ is an integer greater than $1$, and is equal to the first solution, or its replacement, when $c$ is any other integer. So when $c$ is an integer, a more complicated expression must be used for a second solution, equal to the first solution multiplied by $\ln(z)$, plus another series in powers of z, involving the digamma function. See Olde Daalhuis (2010) for details.





The generalization of this equation to three arbitrary regular singular points is given by Riemann's differential equation. Any second order linear differential equation with three regular singular points can be converted to the hypergeometric differential equation by a change of variables.
Solutions at the singular points

Solutions to the hypergeometric differential equation are built out of the hypergeometric series $F(a,b;c;z)$. The equation has two linearly independent solutions. At each of the three singular points $0, 1$, $\infty$, there are usually two special solutions of the form $x^s$ times a holomorphic function of $x$, where $s$ is one of the two roots of the indicial equation and $x$ is a local variable vanishing at a regular singular point. This gives $3 \times 2 = 6$ special solutions, as follows.

Around the point $z = 0$, two independent solutions are, if $c$ is not a non-positive integer,

    $$ {\displaystyle \,_{2}F_{1}(a,b;c;z)}$$

and, on condition that $c$ is not an integer,

   $${\displaystyle z^{1-c}\,_{2}F_{1}(1+a-c,1+b-c;2-c;z)} .$$

If $c$ is a non-positive integer $1-m$, then the first of these solutions does not exist and must be replaced by ${\displaystyle z^{m}F(a+m,b+m;1+m;z).}$ The second solution does not exist when $c$ is an integer greater than $1$, and is equal to the first solution, or its replacement, when $c$ is any other integer. So when $c$ is an integer, a more complicated expression must be used for a second solution, equal to the first solution multiplied by $\ln(z)$, plus another series in powers of z, involving the digamma function. See Olde Daalhuis (2010) for details.

If $c$ is not a non-positive integer, $c\neq 0,-1,-2,-3,...$,

    $$ {\displaystyle \,_{2}F_{1}(a,b;c;z)}$$

is a solution of  $ H_{abc}w=0$. Set   $z=1-x$   and  consider   $F^1=  F (a,b;c;1-x)$.  Then $c-(a+b+1)(1-x)=  c-(a+b+1)-  [c-(a+b+1)] x    $.  Check

$ -( c-(a+b+1)-  [c-(a+b+1)] x)F_1'(1-x) = ( (a+b+1) -c  +  [c-(a+b+1)] x)F_1'(1-x) $,  $c_1= (a+b+1) -c$,  $(a+b+1) -c=a_1+b_1+1$,  $(a+b -c=a_1+b_1$,  $a_1 b_1= ab$

Solve $(a+b -c=a_1+b_1$,  $a_1 b_1= ab$ in  $a_1,  b_1$.   $F_1= F (a_1,b_1;c_1;1-x)$  is a  solution of   $ H_{a_1b_1c_1}w=0$  and $F_1(0)=\infty$ .

Foundamental solution for equation $T_{\alpha}u(x)=0$:

For extension of Euler's first and second transformations, see Rathie \& Paris (2007) and Rakha \& Rathie (2011). It can also be written as linear combination.

\bcl\label{sol1}
Around $z = 1$, if   $c- a - b$ is not an integer, one has two
independent solutions  $abc$-the hypergeometric differential equation:

$ {\displaystyle  X_1(z)= \,_{2}F_{1}(a,b;1+a+b-c;1-z)}$

and

${\displaystyle X_2(z)=(1-z)^{c-a-b}\;_{2}F_{1}(c-a,c-b;1+c-a-b;1-z)}$.
\ecl
Note if   $c= a + b$, then $X_1(z)= X_2(z)= \,_{2}F_{1}(a,b;1;1-z)$.

We consider  foundamental solution for equation $T_{\alpha}u(x)=0$:
We look for a radial function $G_{\alpha,n}(x)=g(|x|^2)$ so that  it
is fulfilled
$T_{\alpha}G_{\alpha,n}(x)=0$.
After short calculation we obtain equation for $0<x<1$.

$$x(1-x)g''(x)+\left(\frac{n}{2}-\left(\frac{n}{2}-\alpha\right)x\right)g'(x)+\frac{(n-2-\alpha)\alpha}{4}g(x)=0.$$
This is hypergeometric equation for

$a=-\frac{\alpha}{2}$,  $b=\frac{n-2-\alpha}{2}$ and
$c=\frac{n}{2}$.(see Erdelyi: "Higher
transcedental functions", vol 1).

Here for $ \alpha\neq -2,-3,-4,...$ (in particular $ \alpha  >- 1$),
with  Claim \ref{sol1}, $X_2(z)$ is solution with
$c-a=\frac{n+\alpha}{2},c-b=\frac{2+\alpha}{2},1+c-a-b=\alpha+2$,
$c-a-b=\alpha+1$.
If  $d=c- a - b$  is a negative    integer  it seems  $ {\displaystyle
X_1(z)= \,_{2}F_{1}(a,b;1+a+b-c;1-z)}$ is a  solution.

Since $a=-\frac{\alpha}{2}$,
$b=\frac{n-2-\alpha}{2}$ and $c=\frac{n}{2}$, we find
$1+a+b-c=-\alpha$;   then by Claim \ref{sol1},
$X_1(z)=\,_{2}F_{1}(a,b;-\alpha;1-z)$ is a solution.

Since $1+a+b-c>a +b$ iff  $1<c$. Since $c=-\alpha$,  $1<c$   iff $
\alpha  >- 1$.  Here $X_1(0)$ finite it is not  good candidate for
Green.

  If $a>c-b>0$, $b>0$, then   $F(a,b,c,x)\sim  (1-x)^{c-a-b}$.  In
particular  $F(a,b,c,x)\rightarrow \infty$ if $x\rightarrow 1_-$.  See
Flett  \cite{Flett}.

Hence  $Y(x):=
F\left(\frac{n+\alpha}{2},\frac{2+\alpha}{2},\alpha+2;1-|x|^2\right)
\sim |x|^{2-n} $   if $x\rightarrow 0$.

Here $c-a-b= 1- n/2$. Thus for $n>2$, since  $1-( 1-|x|^2)=|x|^2 $,
$Y(x)\sim (|x|^2)^{ 1- n/2}$ and therefore   $Y(x) \sim |x|^{2-n} $  if
$x\rightarrow 0$.

Thus $X_2$  has  singularity  required for Green function at $x=0$.




Let $R$ be the radial derivative, $Ru(x)=\sum\limits_{i=1}^{n}\frac{\partial{u}}{\partial x_i}x_i=\langle x,\nabla u(x)\rangle $.
\begin{thm}
Let $u,v \in C^2(\mathbb{B}^n)$, $0<r<1$, $\alpha>-1$. Then
\begin{equation}\label{a-GrFla}
r^{n-2}(1-r^2)^{-\alpha}\int\limits_{\Sp^{n-1}}(uRv-vRu)(r\zeta)dS(\zeta)=
\int\limits_{r\B^n}(u(x)T_{\alpha}v(x)-v(x)T_{\alpha}u(x))(1-|x|^2)^{-\alpha-1}dV(x)
\end{equation}
\end{thm}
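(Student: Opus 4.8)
The plan is to recognize the right-hand integrand as an exact divergence (with respect to a suitable weight) and then apply the divergence theorem on $r\B^n$, whose closure is a compact subset of $\B^n$, so that $u,v$ are $C^2$ up to $\partial(r\B^n)$ and no regularity issue arises. First I would expand $u\,T_\alpha v - v\,T_\alpha u$ using \eqref{alfaLap}: the zeroth-order terms contribute $(n-2-\alpha)\alpha(uv - vu) = 0$ and cancel, leaving
\begin{equation*}
u\,T_\alpha v - v\,T_\alpha u = (1-|x|^2)(u\Delta v - v\Delta u) + 2\alpha(u\,Rv - v\,Ru),
\end{equation*}
so that, after dividing by $(1-|x|^2)^{\alpha+1}$, the integrand on the right-hand side of \eqref{a-GrFla} becomes $(1-|x|^2)^{-\alpha}(u\Delta v - v\Delta u) + 2\alpha(1-|x|^2)^{-\alpha-1}(u\,Rv - v\,Ru)$.

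The key algebraic observation is that the weight $w(x) := (1-|x|^2)^{-\alpha}$ is exactly the one that converts this expression into a divergence. Indeed $\nabla w(x) = 2\alpha x(1-|x|^2)^{-\alpha-1}$, hence $\langle \nabla w, u\nabla v - v\nabla u\rangle = 2\alpha(1-|x|^2)^{-\alpha-1}(u\,Rv - v\,Ru)$ since $Ru = \langle x,\nabla u\rangle$. Combining this with the classical identity $\operatorname{div}\!\big(w(u\nabla v - v\nabla u)\big) = w(u\Delta v - v\Delta u) + \langle\nabla w, u\nabla v - v\nabla u\rangle$, I obtain that the right-hand integrand of \eqref{a-GrFla} equals $\operatorname{div}\!\big(w\,(u\nabla v - v\nabla u)\big)$ identically on $\B^n$.

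Then I would apply the divergence theorem on $r\B^n$:
\begin{equation*}
\int_{r\B^n}\operatorname{div}\!\big(w(u\nabla v - v\nabla u)\big)\,dV(x) = \int_{|x|=r} w\,\langle u\nabla v - v\nabla u,\ \nu\rangle\,dS(x),
\end{equation*}
with outer unit normal $\nu = x/r$. On the sphere $|x|=r$ the weight is the constant $(1-r^2)^{-\alpha}$, and $\langle u\nabla v - v\nabla u,\ x/r\rangle = \tfrac{1}{r}(u\,Rv - v\,Ru)$, so the boundary integral equals $\tfrac{1}{r}(1-r^2)^{-\alpha}\int_{|x|=r}(u\,Rv - v\,Ru)\,dS(x)$. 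A polar change of variables $x = r\zeta$, $\zeta\in\Sp^{n-1}$, with $dS(x) = r^{n-1}\,dS(\zeta)$, turns this into $r^{n-2}(1-r^2)^{-\alpha}\int_{\Sp^{n-1}}(u\,Rv - v\,Ru)(r\zeta)\,dS(\zeta)$, which is precisely the left-hand side of \eqref{a-GrFla}.

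I do not expect a genuine obstacle here: since $\overline{r\B^n}\subset\B^n$ and $u,v\in C^2$, the divergence theorem applies without any integrability hypothesis, and the remaining computations are bookkeeping. The only real content is the middle step — spotting the weight $w=(1-|x|^2)^{-\alpha}$ and verifying that it renders $\big(u\,T_\alpha v - v\,T_\alpha u\big)(1-|x|^2)^{-\alpha-1}$ a perfect divergence; this is also what will later make this identity useful for constructing the Green function of $T_\alpha$ (one takes $v = G_{\alpha,n}$, built from the hypergeometric solution $X_2$ above, and lets $r\to 1$).
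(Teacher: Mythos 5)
Your proof is correct and follows essentially the same route as the paper: the paper applies the general Stokes formula to the $(n-1)$-form $\rho^{-\alpha} v \sum_i \partial_i u\, w_i$ (and its counterpart with $u,v$ swapped) and subtracts, which is exactly your divergence-theorem computation for the weighted field $(1-|x|^2)^{-\alpha}(u\nabla v - v\nabla u)$ packaged in differential-form language. The key point — that the weight $(1-|x|^2)^{-\alpha}$ turns $(u\,T_\alpha v - v\,T_\alpha u)(1-|x|^2)^{-\alpha-1}$ into an exact divergence, with the zeroth-order terms cancelling — is identical in both arguments.
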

\begin{proof}
To simplify notations we use
$$ \rho=1-||x||^2, \ w_i=(-1)^{i+1}dx_1 \wedge \ldots \wedge \widehat{dx_i}\wedge \ldots \wedge dx_n, \ w=dx_1\wedge dx_2\wedge \ldots \wedge dx_n=dV. $$
Let $W=\rho^{-\alpha}v\sum\limits_{i=1}^{n}\frac{\partial u}{\partial x_i}w_i$ be form defined in $r\B^n$. Using the general Stokes formula (conditions are satisfied) we obtain
$$\int\limits_{r\B^n}dW=\int\limits_{r\Sp^{n-1}}W.$$
Let us now find $dW$.
Notice first that $d{\rho}=-2\sum\limits_{i=1}^{n}x_i dx_i$ , $dw_i=0$, $dx_i\wedge w_i=w$ and $dx_j\wedge w_i=0$ for $i\neq j$.
Now
\begin{eqnarray*}dW&=&d(\rho ^{-\alpha})v\sum\limits_{i=1}^{n}\frac{\partial u}{x_i}w_i+\rho^{-\alpha}d\left(v\sum\limits_{i=1}^{n}\frac{\partial u}{\partial x_i}w_i\right) \\
&=& (-\alpha)\rho^{-\alpha-1}\left(-2\sum_{i=1}^{n}x_i dx_i\right)v\sum\limits_{i=1}^{n}\frac{\partial u}{x_i}w_i+\rho^{-\alpha}\left(\sum_{i=1}^{n}\frac{\partial v}{x_i}dx_i\sum_{i=1}^{n}\frac{\partial u}{\partial x_i}w_i+v\sum_{i=1}^{n}d\left(\frac{\partial u}{\partial x_i}\right)w_i\right) \\
&=& 2\alpha v \rho^{-\alpha-1}\sum\limits_{i=1}^{n}x_i\frac{\partial u}{\partial x_i}w+\rho^{-\alpha}\left(\sum\limits_{i=1}^{n}\frac{\partial v}{\partial x_i}\frac{\partial u}{\partial x_i}w+v\sum\limits_{i=1}^{n}\left(\frac{{\partial}^2 u}{{\partial x_i}^2}w\right)\right) \\
&=&  2\alpha v \rho^{-\alpha-1}\sum\limits_{i=1}^{n}x_i\frac{\partial u}{\partial x_i}w+\rho^{-\alpha}
\left(\sum\limits_{i=1}^{n}\frac{\partial v}{\partial x_i}\frac{\partial u}{\partial x_i}+v\Delta{u}\right)w\\
&=& \rho^{-\alpha-1}v\left(2\alpha R{u}+\rho \Delta{u}\right)w+\rho^{-\alpha}\sum\limits_{i=1}^{n}\frac{\partial v}{\partial x_i}\frac{\partial u}{\partial x_i}w\\
\end{eqnarray*}
Now from general Stoke's formula we have
$$\int\limits_{r\B^n}\left(\rho^{-\alpha-1}v\left(2\alpha R{u}+\rho \Delta{u}\right)+\rho^{-\alpha}\sum\limits_{i=1}^{n}\frac{\partial v}{\partial x_i}\frac{\partial u}{\partial x_i}\right)w=\int\limits_{r\Sp^{n-1}}\rho^{-\alpha}v\sum\limits_{i=1}^{n}\frac{\partial u}{\partial x_i}w_i.$$
Repeating this proces with $u$ and $v$ interchanged we obtain
$$\int\limits_{r\B^n}\left(\rho^{-\alpha-1}u\left(2\alpha R{v}+\rho \Delta{v}\right)+\rho^{-\alpha}\sum\limits_{i=1}^{n}\frac{\partial u}{\partial x_i}\frac{\partial v}{\partial x_i}\right)w=\int\limits_{r\Sp^{n-1}}\rho^{-\alpha}u\sum\limits_{i=1}^{n}\frac{\partial v}{\partial x_i}w_i.$$
By subtracting that two equations we obtain
$$\int\limits_{r\B^n}\rho^{-\alpha-1}\left( u(2\alpha R{v}+\rho \Delta{v})-v(2\alpha R{u}+\rho \Delta{u})\right) w=\int\limits_{r\Sp^{n-1}}\rho^{-\alpha}\left(u\sum\limits_{i=1}^{n}\frac{\partial v}{\partial x_i}w_i-v\sum\limits_{i=1}^{n}\frac{\partial u}{\partial x_i}w_i\right).$$
Let's remember that $$T_{\alpha}u(x)=(1-|x|^2)\Delta u(x)+2 \alpha \langle x,\nabla u(x)\rangle   +
(n-2-\alpha) \alpha  u(x).$$
Using notations introduced here, equation has the form
$$T_{\alpha}u(x)=\rho\Delta u(x)+2 \alpha Ru(x)+
(n-2-\alpha) \alpha  u(x) $$
and now we have equation
$$uT_{\alpha}v-vT_{\alpha}u=u(\rho \Delta{v}+2\alpha Rv)-v(\rho\Delta{u}+2\alpha Ru).$$
Finally we obtain
\begin{eqnarray*}
\int\limits_{r\B^n}\left(uT_{\alpha}v-vT_{\alpha}u \right) \rho^{-\alpha-1} w&=&(1-r^2)^{-\alpha}\int\limits_{r\Sp^{n-1}}\left(u\sum\limits_{i=1}^{n}\frac{\partial v}{\partial x_i}w_i-v\sum\limits_{i=1}^{n}\frac{\partial u}{\partial x_i}w_i\right) \\
&=& (1-r^2)^{-\alpha}\int\limits_{r\Sp^{n-1}}\left(u\sum\limits_{i=1}^{n}\frac{\partial v}{\partial x_i}x_i-v\sum\limits_{i=1}^{n}\frac{\partial u}{\partial x_i}x_i\right)\frac{1}{r}dS(x) \\
&=& (1-r^2)^{-\alpha}\int\limits_{r\Sp^{n-1}}\left(uRv-vRu\right)(x)\frac{1}{r}dS(x) \\
&=& (1-r^2)^{-\alpha}r^{n-2}\int\limits_{\Sp^{n-1}}\left(uRv-vRu\right)(r\zeta)dS(\zeta).
\end{eqnarray*}
(Unit normal is $\frac{x}{||x||}=\frac{x}{r}$, so $\frac{x_i}{r} dS(x)=w_i$. See Zorich II, equation 13.21.
Change of variables gives the last equation.)

\end{proof}

%
%
%

$$\frac{d}{dx}\left[x^{c-1}F(a,b;c,x)\right]=(c-1)x^{c-2}F(a,b;c-1;x)$$

\begin{align}\label{u_6}
   u_6 &=(1-x)^{c-a-b}F(c-a,c-b;c+1-a-b;1-x)=\\
   &=x^{1-c}(1-x)^{c-a-b}F(1-a,1-b;c+1-a-b;1-x)
\end{align}


$$F(a,b;c;1)=\frac{\Gamma(c)\Gamma(c-a-b)}{\Gamma(c-a)\Gamma(c-b)}, c\neq 0,-1,-2,\ldots, Re c> Re(a+b).$$

$$F(1-a,1-b;c+1-a-b;1)=\frac{\Gamma(c-1)\Gamma(c+1-a-b)}{\Gamma(c-a)\Gamma(c-b)}$$

$$a=-\frac{\alpha}{2},b=\frac{n-2-\alpha}{2},c=\frac{n}{2}$$

$$c-a=\frac{\alpha+n}{2},c-b=\frac{\alpha+n}{2}, c+1-a-b=\alpha+2$$

$$h(x)=(1-x)^{\alpha+1}F\left(\frac{\alpha+n}{2},\frac{\alpha+2}{2};\alpha+2;1-x\right)$$

$$F\left(\frac{\alpha+n}{2},\frac{\alpha+2}{2};\alpha+2;x\right)\sim  (1-x)^{1-\frac{n}{2}}\frac{\Gamma(\alpha+2)\Gamma(\frac{n}{2}-1)}{\Gamma(\frac{\alpha+n}{2})\Gamma(\frac{\alpha+2}{2})}, x\to 1^-$$

$$G_\alpha(x)=d_\alpha (1-|x|^2)^{\alpha+1}F\left(\frac{\alpha+n}{2},\frac{\alpha+2}{2};\alpha+2;1-|x|^2\right)$$

$$R G_\alpha(x)=-2d_\alpha (\alpha+1) |x|^2(1-|x|^2)^\alpha F\left(\frac{\alpha+n}{2},\frac{\alpha+2}{2};\alpha+1;1-|x|^2\right)$$

$$a=-\frac{\alpha}{2}+1,b=\frac{n-2-\alpha}{2}+1,c=\frac{n}{2}+1$$

$$F\left(\frac{\alpha+n}{2},\frac{\alpha+2}{2};\alpha+1;x\right)=(1-x)^{-\frac{n}{2}}F\left(\frac{\alpha}{2},\frac{\alpha+2-n}{2};\alpha+1;x\right)$$

$$F\left(\frac{\alpha}{2},\frac{\alpha+2-n}{2};\alpha+1;1\right)=\frac{\Gamma(\alpha+1)\Gamma(\frac{n}{2})}{\Gamma(\frac{\alpha+n}{2})\Gamma(\frac{\alpha+2}{2})}$$

$$R G_\alpha (x)\sim -2 d_\alpha (\alpha+1)|x|^{2-n}\frac{\Gamma(\alpha+1)\Gamma(\frac{n}{2})}{\Gamma(\frac{\alpha+n}{2})\Gamma(\frac{\alpha+2}{2})}, x\to 0$$

$$r^{n-2}(1-r^2)^{-\alpha}\int\limits_{\Sp^{n-1}}\left\{u(r\zeta)R G_\alpha(r\zeta)-G_\alpha(r\zeta)Ru(r\zeta)\right\}dS(\zeta)-$$

$$-\epsilon^{n-2}(1-\epsilon^2)^{-\alpha}\int\limits_{\Sp^{n-1}}\left\{u(\epsilon\zeta)R G_\alpha(\epsilon\zeta)-G_\alpha(\epsilon\zeta)Ru(\epsilon\zeta)\right\}dS(\zeta)=$$

\begin{equation}\label{GrinBr}
=\int\limits_{r\B^n\setminus \epsilon\B^n}(T_{\alpha}u(x))G_\alpha(x)(1-|x|^2)^{-\alpha-1}dV(x)
\end{equation}

$$d_\alpha=-\frac12\frac{\Gamma(\frac{\alpha+n}{2})\Gamma(\frac{\alpha+2}{2})}{\Gamma(\alpha+1)\Gamma(\frac{n}{2})}, c_\alpha=-2\frac{\Gamma(\alpha+1)\Gamma(\frac{n}{2})}{\Gamma(\frac{\alpha+n}{2})\Gamma(\frac{\alpha+2}{2})}$$

Set  $\psi=T_{\alpha}u(x)$  and suppose that  $\psi$ is integrable on  $\B^n$.

Since  $G_\alpha(x)\preceq (1-|x|^2)^{\alpha+1}$ if $|x|\rightarrow 1$, then  $G_\alpha(x)(1-|x|^2)^{-\alpha-1}$  is bounded for $|x|\rightarrow 1$.

\begin{equation}\label{DirRes0}
u(0)=c_\alpha \int\limits_{\Sp^{n-1}}u(\zeta)dS(\zeta)+\int\limits_{\B^n}(T_{\alpha}u(x))G_\alpha(x)(1-|x|^2)^{-\alpha-1}dV(x)
\end{equation}

In \cite{Liu2004}, Liu and Peng  introduced the following differential operators on $\B^n$ the unit ball in $\R^n$, $n\geq 2$ and $\gamma\in \R$.

$$\Delta_\gamma=(1-|x|^2)\left\{\ \frac{1-|x|^2}{4}\sum_j \frac{\partial^2}{\partial x_j^2} +\gamma \sum_j x_j \frac{\partial}{\partial x_j}+\gamma\left( \frac{n}{2}-1-\gamma \right)  \right\}.$$

\begin{equation*}
    \varphi_a(x)=\frac{|x-a|^2a-(1-|a|^2)(x-a)}{[x,a]^2},
\end{equation*}
gde je
$$[x,a]=|x||x^*-a|=|a||x-a^*|\quad \mbox{i}\quad [x,a]^2=1+|x|^2|a|^2-2xa.$$

by a simple computation we get $\varphi_x(0)=x$ and therefeore $\varphi_{x}=\varphi_x^{-1}$.

Derivative of the differentiable function $f$ which maps one open set in $\mathbb{R}^n$ to another can be represented with Jacobian matrix
$$
f'(x)\quad \mbox{ili}\quad Df(x),
$$
with elements
\begin{equation*}
    f'(x)_{ij}=\frac{\partial f_i}{\partial x_j}=D_jf_i(x).
\end{equation*}

We define the longest extension at the point $x$ as $|f'(x)|=\max\limits_{|h|=1}|f'(x)h|$, where $h$ represents a vector in $\mathbb{R}^n$. Also, we say that the square matrix $M$ is conformal if the value of $|Mh|$ is a non-negative real number independent of the unit vector $h$. It can be checked that $\gamma'(x)$ is conformal matrix for any $\gamma\in\widehat{M}(\mathbb{R}^n)$. \smallskip






$$
1-|\varphi_x(y)|^2=\frac{(1-|x|^2)(1-|y|^2)}{[x,y]^2}, |\varphi_x'(y)|=\frac{1-|x|^2}{[x,y]^2}, [x,\varphi_x(y)]=\frac{1-|x|^2}{[x,y]}
$$

In \cite{Liu2004} can be found

$$\Delta_\gamma\left\{|\varphi_x'(y)|^{(n-2-2\gamma)/2}u(\varphi_x(y))\right\}=|\varphi_x'(y)|^{(n-2-2\gamma)/2}(\Delta_\gamma u)(\varphi_x(y))$$

We easily check that $T_\alpha u(x)=\frac{4}{1-|x|^2}\Delta_{\alpha/2}u(x)$ holds true. This gives us formula

\begin{equation} \label{InvLapFor}
    T_\alpha\left\{ \frac{1}{[x,y]^{n-2-\alpha}}u(\varphi_x(y))\right\}= \frac{1-|x|^2}{[x,y]^{n-\alpha}}(T_\alpha u)(\varphi_x(y))
\end{equation}

When applying formula (\ref{DirRes0}) to the function $v(y)= \frac{1}{[x,y]^{n-2-\alpha}}u(\varphi_x(y))$ and after change of variables $z=\varphi_x(y)$ we get

\begin{equation}\label{DirRes}
u(x)=c_\alpha \int\limits_{\Sp^{n-1}}u(\zeta)\frac{(1-|x|^2)^{1+\alpha}}{|x-\zeta|^{n+\alpha}}dS(\zeta)+\int\limits_{\B^n}(T_{\alpha}u(y))G_\alpha(x,y)(1-|y|^2)^{-\alpha-1} d V(y).
\end{equation}

where $G_\alpha(x,y)=|\varphi'_x(y)|^{(n-2-\alpha)/2}G_\alpha(\varphi_x(y))$. Here is important to notice that $G_\alpha(y)=G_\alpha(0,y)$.







Let us consider the following Dirichlet boundary problem  for $T_{\alpha}$  laplacian.   \vspace{-2mm}
\be\label{eqHypDir}\left\{
\begin{array}{ll}
 u(x)=\phi(x),        & \hbox{if} \,\, x\in \mathbb{S}^{n-1}, \\
T_\alpha u(x)=\psi(x), & \hbox{if}  \,\,  x\in \mathbb{B}^n .
\end{array}
\right.\ee

Define
\beq
P_{\alpha}[u](x)=c_\alpha \int\limits_{\Sp^{n-1}}u(\zeta)\frac{(1-|x|^2)^{1+\alpha}}{|x-\zeta|^{n+\alpha}}d S(\zeta), \\
G_{\alpha}[\psi](x)=\int\limits_{\B^n}\psi(y)G_\alpha(x,y)(1-|y|^2)^{-\alpha-1} d V(y).
\eeq
\begin{thm}\label{DirOpsti} Let $n\geq 3$ and $u\in
C^{2}(\mathbb{B}^{n},\IR^n) \cap C(\overline{\mathbb{B}^{n}},\IR^n )$ is a solution to  Dirichlet boundary problem for $T_{\alpha}$  harmonic functions, then it has a
representation\vspace{-2mm}
\begin{equation}\label{RepHyp}
u=P_{\alpha}[\phi]+G_{\alpha}[\psi],\vspace{-2mm}
\end{equation}
provided that
\vspace{-2mm}
$$u\mid_{\mathbb{S}^{n-1}}=\phi\quad \mbox{and}\quad
\int_{\mathbb{B}^{n}} |\psi(x)|\,d V(x)\leq C_1<\infty.\vspace{-1mm}$$
\end{thm}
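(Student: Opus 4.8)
The plan is to deduce the representation formula \eqref{RepHyp} from the base-point identity \eqref{DirRes}, which was already established for $C^2(\overline{\mathbb{B}^n})$ functions, by an exhaustion argument combined with a limiting procedure that relaxes the regularity at the boundary. First I would fix $r<1$ and apply the already-proved identity \eqref{DirRes} on the ball $r\mathbb{B}^n$: rescaling $x\mapsto x/r$ turns $u$ restricted to $\overline{r\mathbb{B}^n}$ into a $C^2(\overline{\mathbb{B}^n})$ function, so the formula is legitimate there. This yields, for $|x|<r$,
\begin{equation*}
u(x)=c_\alpha\int_{\Sp^{n-1}}u(r\zeta)\frac{(r^2-|x|^2)^{1+\alpha}}{r^{1+\alpha}|x-r\zeta|^{n+\alpha}}\,dS(\zeta)+\int_{r\B^n}(T_\alpha u(y))G_\alpha^{(r)}(x,y)(r^2-|y|^2)^{-\alpha-1}\,dV(y),
\end{equation*}
where $G_\alpha^{(r)}$ is the Green function of the rescaled problem. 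The heart of the argument is then to let $r\to 1^-$ and show the right-hand side converges to $P_\alpha[\phi](x)+G_\alpha[\psi](x)$.

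For the boundary integral, I would use $u\in C(\overline{\mathbb{B}^n})$ so that $u(r\zeta)\to\phi(\zeta)$ uniformly on $\Sp^{n-1}$, together with the fact that the rescaled Poisson kernels $c_\alpha(r^2-|x|^2)^{1+\alpha}/(r^{1+\alpha}|x-r\zeta|^{n+\alpha})$ converge uniformly (for fixed $x$) to the kernel $c_\alpha(1-|x|^2)^{1+\alpha}/|x-\zeta|^{n+\alpha}$ of $P_\alpha$; one should note these kernels are nonnegative (for $\alpha>-1$, so $n+\alpha>0$) and, by taking $u\equiv 1$ in the base identity, they integrate to $1/c_\alpha$, which gives the needed uniform control. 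For the volume term I would invoke the monotone/dominated convergence theorem: as $r\to1$, $r\B^n\uparrow\B^n$, and the integrand converges pointwise to $(T_\alpha u(y))G_\alpha(x,y)(1-|y|^2)^{-\alpha-1}=\psi(y)G_\alpha(x,y)(1-|y|^2)^{-\alpha-1}$. The crucial estimate is that $G_\alpha(x,y)(1-|y|^2)^{-\alpha-1}$ is bounded in $y$ near $\Sp^{n-1}$ — this follows from the asymptotics $G_\alpha(x,y)\preceq |\varphi_x'(y)|^{(n-2-\alpha)/2}(1-|\varphi_x(y)|^2)^{\alpha+1}$ established above (via the behaviour of the hypergeometric function $h$ at $1$) and the Möbius identities $1-|\varphi_x(y)|^2=(1-|x|^2)(1-|y|^2)/[x,y]^2$, $|\varphi_x'(y)|=(1-|x|^2)/[x,y]^2$, which together give $G_\alpha(x,y)(1-|y|^2)^{-\alpha-1}\preceq C(x)(1-|y|^2)^{0}$, i.e.\ bounded. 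Then the hypothesis $\int_{\B^n}|\psi|\,dV\le C_1<\infty$ furnishes the integrable dominating function, and Lebesgue's theorem applies; one must also check the comparison constants for $G_\alpha^{(r)}$ are uniform in $r$ on compact subsets, which again reduces to the hypergeometric asymptotics.

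I expect the main obstacle to be controlling the Green-function factor uniformly as $r\to 1$: the rescaled Green function $G_\alpha^{(r)}(x,y)$ has a singularity at $y=x$ (of order $|x-y|^{2-n}$, locally integrable since $n\ge 3$) and degenerates near $|y|=r$, and one needs an estimate of the form $G_\alpha^{(r)}(x,y)(r^2-|y|^2)^{-\alpha-1}\le \Phi(y)$ with $\Phi$ independent of $r$ and integrable against $|\psi|$ on $\B^n$ — or, absent that, a more careful splitting of $r\B^n$ into a fixed neighbourhood of $x$ (where the singularity lives but $\psi$ need not be controlled pointwise, so one uses $C^2$ near $x$ and local integrability of $|x-y|^{2-n}$) and the complementary region (where $G_\alpha$ is bounded and $\psi\in L^1$ does the job). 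Handling the near-boundary degeneracy of $G_\alpha^{(r)}$ cleanly is the delicate point; everything else is uniform convergence of explicit kernels plus standard measure theory. Once both limits are identified, letting $r\to1$ in the displayed identity gives $u(x)=P_\alpha[\phi](x)+G_\alpha[\psi](x)$ for every $x\in\B^n$, which is \eqref{RepHyp}.
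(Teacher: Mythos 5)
Your argument breaks down at the very first step. The operator $T_\alpha$ is not dilation invariant: it is tied to the unit ball through the weight $1-|x|^2$ (its invariance is of M\"obius type, formula \eqref{InvLapFor}, not of scaling type). If you set $u_r(x)=u(rx)$, a direct computation gives $T_\alpha u_r(x)=(T_\alpha u)(rx)-(1-r^2)(\Delta u)(rx)$, so the dilated function does not solve the same equation with data $\psi(r\cdot)$, and the error term $(1-r^2)\Delta u(r\cdot)$ is not controlled by the hypotheses ($u$ is only $C^2$ in the open ball and $\psi\in L^1$ controls the combination $(1-|y|^2)\Delta u+2\alpha Ru+(n-2-\alpha)\alpha u$, not $\Delta u$ or $\nabla u$ separately). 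Consequently the identity you write on $r\mathbb{B}^n$, with the explicit kernels $(r^2-|x|^2)^{1+\alpha}/\bigl(r^{1+\alpha}|x-r\zeta|^{n+\alpha}\bigr)$ and a ``rescaled'' Green function $G_\alpha^{(r)}$ with weight $(r^2-|y|^2)^{-\alpha-1}$, is not a consequence of \eqref{DirRes} and is not a formula satisfied by $u$; it would be correct only for a scale-invariant operator. A second, related error is the normalization step: since $T_\alpha 1=(n-2-\alpha)\alpha\neq 0$ in general, constants are not $T_\alpha$-harmonic, so plugging $u\equiv 1$ into the base identity does not show that the kernels integrate to $1/c_\alpha$; the zeroth-order term is precisely the reason the paper's intermediate formula \eqref{TaBr} carries the extra term $(n-2-\alpha)\alpha\,G_\alpha(r)\int_{r\mathbb{B}^n}u(x)(1-|x|^2)^{-\alpha-1}dV(x)$.

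The paper avoids dilation altogether: it inserts $v=g_\alpha(\cdot,r)=G_\alpha-G_\alpha(r)$ (which vanishes on $|x|=r$, killing the boundary term containing $Ru$) into the Green identity \eqref{a-GrFla} on $r\mathbb{B}^n$, obtaining the $T_\alpha$-analogue \eqref{TaBr} of Stoll's lemma \eqref{HypBr} at the origin; it then checks that $\int_{\mathbb{B}^n}G_\alpha(|x|)|\psi(x)|(1-|x|^2)^{-\alpha-1}dV(x)$ is finite under $\int|\psi|\,dV<\infty$ by splitting into $\tfrac12\mathbb{B}^n$ and the outer annulus, where $h_\alpha(|x|)=G_\alpha(|x|)(1-|x|^2)^{-\alpha-1}$ stays bounded (the hypergeometric asymptotics you also invoke), and finally passes from the origin to an arbitrary point via the invariance \eqref{InvLapFor}, as in the derivation of \eqref{DirRes}. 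Your estimates on $G_\alpha(x,y)(1-|y|^2)^{-\alpha-1}$ via the M\"obius identities, and the use of continuity of $u$ up to the boundary for the spherical term, are sound and parallel what the paper needs in the limit $r\to 1$; but to repair your proof you must replace the rescaling step by an exhaustion argument for the \emph{same} operator on $r\mathbb{B}^n$ (i.e.\ a modified Green function vanishing on $|x|=r$, plus the M\"obius transfer), which is exactly the paper's route.
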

\begin{proof}
After changing $v(x)$ from the formula (\ref{a-GrFla}) with $g_\alpha(x,r)=G_\alpha(x)-G_\alpha(r)$, we can establish a version of formula (\ref{HypBr}) for $T_\alpha$ Laplacian, which can be written as:

\begin{align}
    \label{TaBr}
    u(0)&=d_\alpha r^nF\left(\frac{\alpha+n}{2},\frac{\alpha+2}{2};\alpha+1;1-|x|^2\right)\int_{\mathbb{S}^{n-1}}u(r\zeta)d S(\zeta)\\ \notag
    &+ \int_{r\mathbb{B}^n}T_\alpha u(x)\{G_\alpha(x)-G_\alpha(r)\}(1-|x|^2)^{-\alpha-1}d V(x)\\ \notag
    &+(n-2-\alpha)\alpha G_\alpha(r)\int_{r\mathbb{B}^n} u(x)(1-|x|^2)^{-\alpha-1}d V(x).
\end{align}
Define $M_\psi(r):=\sup\limits_{|x|<r}|\psi(x)|.$ Let's check if
$$\int_{\mathbb{B}^n} G_\alpha(|x|)|\psi(x)|(1-|x|^2)^{-\alpha-1}d V(x)\leq C_3:=C_3(C_1,M_{\psi G_\alpha}(1/2))<\infty,$$
for such $\psi$ as in the statement of this theorem.
First, we write
\begin{align*}
     \int_{\mathbb{B}^n} G_\alpha(|x|)|\psi(x)(1-|x|^2)^{-\alpha-1}d V(x)&= \int\limits_{\frac12\mathbb{B}^n} G_\alpha(|x|)|\psi(x)(1-|x|^2)^{-\alpha-1}d V(x)\\
    &+ \int\limits_{\mathbb{B}^n\setminus(\frac12\mathbb{B}^n)} G_\alpha(|x|)|\psi(x)(1-|x|^2)^{-\alpha-1}d V(x).
\end{align*}

Now, it is imidiate that
$$\int_{\frac12\mathbb{B}^n} G_\alpha(|x|)|\psi(x)|(1-|x|^2)^{-\alpha-1}d V(x)\leq \left(\frac{3}{4}\right)^{-\alpha-1}M_{\psi G_\alpha}(1/2).$$
At the section ($15.4(ii)$) of \cite{dlmf} we can find that
$$\lim\limits_{z\to 1^-} \frac{F(a,b;c;z)}{(1-z)^{c-a-b}}=\frac{\Gamma(c)\Gamma(a+b-c)}{\Gamma(a)\Gamma(b)},$$
when $\re c<\re(a+b)$. This formula gives us that $\lim\limits_{s\to 0^+}k_\alpha(s)=\frac{\Gamma(\alpha+2)\Gamma((n-2)/2)}{\Gamma((\alpha+n)/2)\Gamma((\alpha+2)/2)},$ where $$k_\alpha(|x|)=|x|^{n-2}F\left(\frac{\alpha+n}{2},\frac{\alpha+2}{2};\alpha+2;1-|x|^2\right).$$
Having that in mind, we get
\begin{equation*}
h_\alpha(|x|):=G_\alpha(|x|)(1-|x|^2)^{-\alpha-1}=d_\alpha |x|^{2-n} k_\alpha(|x|).
\end{equation*}
Since $h_\alpha$ is a continuous function on $[0,1)$, along with the fact that $\lim\limits_{s\to 1}h_\alpha(s)=d_\alpha$ we have that
$$\int\limits_{\mathbb{B}^n\setminus(\frac12\mathbb{B}^n)} G_\alpha(|x|)|\psi(x)(1-|x|^2)^{-\alpha-1}d V(x)\leq C_1 M_{h_\alpha}(1/2).$$
\end{proof}







\begin{thm}\cite{mss}
Suppose  that $f:\mathbb{S}^{n-1}\rightarrow \mathbb{R}^n$  is locally
Lipschitz {\rm (Lip-$1$)}   at $x_0 \in \mathbb{S}$,  $f\in
L^\infty(\mathbb{S}^{n-1})$   and $\,h\,=P[f]$ is a Euclidean harmonic
mapping from
$\mathbb{B}^n$.\\
Then
\begin{itemize}
\item[{\rm S1)}]  $$|h'(r x_0)T|\leq M  \eqno (2')$$

for every  $0\leq r <1$ and unit vector  $T$ which is tangent on
$\mathbb{S}^{n-1}_r$  at  $r x_0$,  where $M$ depends only on $n$,
$|f|_\infty$  and  $Lf (x_0)$.

If we suppose in addition that  $h$ is  K-quasiregular (shortly K-qr)  mapping  along  $[o,x_0)$,  then\\
  \item[{\rm S2)}]   $$|h'(r x_0)|\leq K \,M  \eqno (1')$$
  for every  $0\leq r <1$.
\end{itemize}
\end{thm}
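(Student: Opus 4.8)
The plan is to work from the Poisson representation $h(x)=P[f](x)=\int_{\mathbb{S}^{n-1}}P(x,\zeta)f(\zeta)\,dS(\zeta)$ (applied coordinatewise, $f$ being $\mathbb{R}^n$-valued), where $P(x,\zeta)=c_n(1-|x|^2)|x-\zeta|^{-n}$, and to differentiate under the integral sign, which is legitimate since the point $rx_0$ at which we estimate lies in the open ball. Because $\int_{\mathbb{S}^{n-1}}P(x,\zeta)\,dS(\zeta)\equiv 1$, one gets $\int_{\mathbb{S}^{n-1}}(D^x_vP)(x,\zeta)\,dS(\zeta)=0$ for every direction $v$, so the constant $f(x_0)$ may be subtracted and
\begin{equation*}
h'(rx_0)v=\int_{\mathbb{S}^{n-1}}(D^x_vP)(rx_0,\zeta)\bigl(f(\zeta)-f(x_0)\bigr)\,dS(\zeta).
\end{equation*}
From here everything reduces to kernel estimates played against the two available bounds on $f$ near $x_0$: the Lipschitz bound $|f(\zeta)-f(x_0)|\le Lf(x_0)\,|\zeta-x_0|$ for $\zeta$ in a neighbourhood of $x_0$, and $|f(\zeta)-f(x_0)|\le 2|f|_\infty$ everywhere.

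For S1) I would exploit a cancellation in the tangential direction. Taking $v=T$ with $T\perp x_0$ (so that $T$ is tangent to $\mathbb{S}^{n-1}_r$ at $rx_0$), a direct computation gives
\begin{equation*}
(D^x_TP)(rx_0,\zeta)=c_n\left[\frac{-2\langle rx_0,T\rangle}{|rx_0-\zeta|^{n}}-\frac{n(1-r^2)\langle rx_0-\zeta,T\rangle}{|rx_0-\zeta|^{n+2}}\right]=\frac{c_n\,n(1-r^2)\langle\zeta-x_0,T\rangle}{|rx_0-\zeta|^{n+2}},
\end{equation*}
the first term vanishing because $\langle x_0,T\rangle=0$, and the second simplifying through $\langle rx_0-\zeta,T\rangle=-\langle\zeta-x_0,T\rangle$. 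Hence $|(D^x_TP)(rx_0,\zeta)|\le c_n n(1-r^2)\,|\zeta-x_0|\,|rx_0-\zeta|^{-n-2}$; the extra factor $|\zeta-x_0|$, which would be absent for a generic (in particular, radial) direction, is precisely what makes the ensuing integral converge.

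I would then estimate that integral as follows. For $r\le\tfrac{1}{2}$ the kernel and its derivative are bounded on $\mathbb{S}^{n-1}$ and $|h'(rx_0)T|\le C(n)|f|_\infty$, so assume $\tfrac{1}{2}\le r<1$ and set $\delta=1-r$; using $|rx_0-\zeta|^2=(1-r)^2+r|x_0-\zeta|^2\asymp\delta^2+|x_0-\zeta|^2$, split $\mathbb{S}^{n-1}=A\cup A^c$ with $A=\{\zeta:|\zeta-x_0|<\rho_0\}$ the Lipschitz neighbourhood. Over $A^c$ the kernel derivative is bounded and $|f(\zeta)-f(x_0)|\le 2|f|_\infty$, so that piece is controlled by $|f|_\infty$. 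Over $A$, introducing polar coordinates about $x_0$ on the sphere (radius $\theta\asymp|\zeta-x_0|$, surface element $\asymp\theta^{n-2}\,d\theta$) and using the Lipschitz bound, the contribution is at most
\begin{equation*}
C(n)\,Lf(x_0)\,\delta\int_0^{\rho_0}\frac{\theta^{n}\,d\theta}{(\delta^2+\theta^2)^{(n+2)/2}}=C(n)\,Lf(x_0)\int_0^{\rho_0/\delta}\frac{s^{n}\,ds}{(1+s^2)^{(n+2)/2}}\le C(n)\,Lf(x_0),
\end{equation*}
since $\int_0^\infty s^{n}(1+s^2)^{-(n+2)/2}\,ds<\infty$ (the integrand decaying like $s^{-2}$). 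Adding the two pieces gives S1) with $M=M(n,|f|_\infty,Lf(x_0))$.

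Finally S2) is a short linear-algebra step on top of S1). Let $\sigma_1\ge\cdots\ge\sigma_n\ge0$ be the singular values of the matrix $h'(rx_0)$, so $|h'(rx_0)|=\sigma_1$. By S1) the restriction of $h'(rx_0)$ to the $(n-1)$-plane $V=x_0^\perp$ has operator norm $\le M$, so by the Courant--Fischer min-max characterization of $\sigma_2$ (taking $V$ as competitor subspace), $\sigma_2\le\max_{v\in V,\,|v|=1}|h'(rx_0)v|\le M$. Since $h$ is $K$-quasiregular along $[0,x_0)$, the linear-dilatation inequality $\sigma_1\le K\sigma_n$ (or, in the volume-derivative normalization, $|h'(rx_0)|^n\le K\,J_h(rx_0)$) holds at $rx_0$; as $\sigma_n\le\sigma_2\le M$ this gives $|h'(rx_0)|=\sigma_1\le KM$, which is S2). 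The step I expect to be the main obstacle is the singular integral in S1) --- more precisely, spotting and exploiting the tangential cancellation: without the gained factor $|\zeta-x_0|$ the analogous integral for the radial derivative $h'(rx_0)x_0$ diverges logarithmically as $r\to1$ for generic Lipschitz data, which is exactly why the quasiregularity hypothesis is indispensable in S2); differentiation under the integral, the kernel computation, and the min-max step are all routine.
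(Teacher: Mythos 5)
You cannot be compared against an in-text argument here, because the paper gives none: this theorem is quoted from \cite{mss} without proof, and the only related material the authors actually prove or state is the kernel estimates of Proposition \ref{prop2a} (proved ``for convenience of the reader'' by polar coordinates on the sphere and the rescaling $\theta=(1-r)u$) and Propositions \ref{Prop3} and \ref{prop3a}. Measured against that, your proposal is correct and is essentially the route taken in the cited source: differentiation under the Poisson integral, subtraction of $f(x_0)$ using $\int_{\mathbb{S}^{n-1}}P(x,\zeta)\,dS(\zeta)\equiv 1$, and the tangential cancellation (for $T\perp x_0$ the term $-2\langle x,T\rangle|x-\zeta|^{-n}$ vanishes and $\langle rx_0-\zeta,T\rangle=-\langle\zeta-x_0,T\rangle$ supplies the extra factor $|\zeta-x_0|$) are exactly what makes S1 work, and the resulting integral $(1-r^2)\int_{\mathbb{S}^{n-1}}|\zeta-x_0|^{2}|rx_0-\zeta|^{-n-2}\,dS(\zeta)$ is of the same family as $I_\alpha$ and $J_\alpha^\beta$ in Propositions \ref{prop2a} and \ref{prop3a}, estimated by the same polar-coordinate and change-of-variable computation the paper itself uses. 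Your S2 step is also fine: S1 bounds the restriction of $h'(rx_0)$ to $x_0^{\perp}$, Courant--Fischer gives $\sigma_2\leq M$, and either normalization of $K$-quasiregularity ($\sigma_1\leq K\sigma_n$, or $|h'|^n\leq K J_h$, which even yields $K^{1/(n-1)}M$) gives $|h'(rx_0)|\leq KM$; just state explicitly which dilatation the hypothesis ``$K$-qr along $[0,x_0)$'' bounds. Two cosmetic points: the constant in S1 also depends on the radius of the neighbourhood on which the Lip-$1$ estimate holds (unless $Lf(x_0)$ is understood as a global quotient from $x_0$), so make that dependence explicit when you absorb the far region $A^{c}$ into the $|f|_\infty$ term; and your closing remark about the logarithmic divergence of the radial-derivative integral correctly explains why the quasiregularity hypothesis, rather than a direct kernel estimate, is what controls the full norm in S2.
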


Here we only  announce  the following results:
\begin{thm}
Suppose  that $f:\mathbb{S}^{n-1}\rightarrow \mathbb{R}^n$  is locally
Lipschitz {\rm (Lip-$1$)}   at $x_0 \in \mathbb{S}$,  $f\in
L^\infty(\mathbb{S}^{n-1})$   and $\,h\,=P_\alpha[f]$, $\alpha >0$,  is a  $T_{\alpha}$  harmonic
mapping from
$\mathbb{B}^n$.\\
Then

\begin{itemize}
  \item[{\rm S3)}]   $$|h'(r x_0)|\leq  C \eqno (2')$$
  for every  $0\leq r <1$.
\end{itemize}
\end{thm}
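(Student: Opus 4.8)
The plan is to estimate the Poisson integral $h=P_\alpha[f]$ directly, the gain over the Euclidean situation coming from the additional boundary decay $(1-|x|^2)^{1+\alpha}$ of the kernel available when $\alpha>0$. Fix $x_0\in\mathbb{S}^{n-1}$, put $x=rx_0$ with $0\le r<1$, and abbreviate $P_\alpha(x,\zeta)=(1-|x|^2)^{1+\alpha}|x-\zeta|^{-(n+\alpha)}$, so that $h(x)=c_\alpha\int_{\mathbb{S}^{n-1}}P_\alpha(x,\zeta)f(\zeta)\,dS(\zeta)$ (harmonicity of $h$ plays no role beyond this representation). For $r$ confined to a fixed interval $[0,r_0]$ with $r_0<1$ the point $x$ stays in a compact subset of $\B^n$ on which $\nabla_xP_\alpha(x,\zeta)$ is bounded uniformly in $\zeta$, whence $|h'(rx_0)|\le c_\alpha\|f\|_\infty|\mathbb{S}^{n-1}|\sup|\nabla_xP_\alpha|<\infty$; so only the regime $r\to1$ needs work. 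All estimates below are carried out componentwise in $f$, and then $|h'(x)|\le\sqrt n\max_k|\nabla h_k(x)|$.

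Next I would peel off the constant. Let $h_1:=P_\alpha[1]$ be the radial $T_\alpha$-harmonic function equal to $1$ on $\mathbb{S}^{n-1}$; from the analysis of the radial equation preceding (\ref{DirRes}) it equals a positive multiple of $F(a,b;c;|x|^2)$, where $a=-\alpha/2$, $b=(n-2-\alpha)/2$, $c=n/2$, this being the solution regular at the origin. Hence $h_1(x)=\phi(|x|^2)$ with $\phi'(z)$ a multiple of $F(a+1,b+1;c+1;z)$, whose parameters satisfy $c+1-(a+1)-(b+1)=\alpha>0$; by Gauss's evaluation of $F(\cdot,\cdot;\cdot;1)$ this series extends continuously to $z=1$, so $\phi'$ is bounded on $[0,1]$ and $|h_1'(rx_0)|=2r|\phi'(r^2)|\le C$ for all $r$. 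Setting $g:=f-f(x_0)$ we get $h=f(x_0)h_1+P_\alpha[g]$, and it remains to bound $\big|\nabla_xP_\alpha[g](rx_0)\big|$, where $g(x_0)=0$, $\|g\|_\infty\le2\|f\|_\infty$, and $|g(\zeta)|\le L|\zeta-x_0|$ on a cap $U=\{\zeta\in\mathbb{S}^{n-1}:|\zeta-x_0|<\delta\}$.

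Differentiating under the integral sign,
$$\nabla_xP_\alpha(x,\zeta)=-2(1+\alpha)\frac{(1-|x|^2)^{\alpha}x}{|x-\zeta|^{n+\alpha}}-(n+\alpha)\frac{(1-|x|^2)^{1+\alpha}(x-\zeta)}{|x-\zeta|^{n+\alpha+2}},$$
so $|\nabla_xP_\alpha(rx_0,\zeta)|\lesssim(1-r^2)^{\alpha}|rx_0-\zeta|^{-(n+\alpha)}+(1-r^2)^{1+\alpha}|rx_0-\zeta|^{-(n+\alpha+1)}$. From the identity $|rx_0-\zeta|^2=(1-r)^2+r|x_0-\zeta|^2$ one has $|rx_0-\zeta|\gtrsim(1-r)+|x_0-\zeta|$ for $\zeta$ near $x_0$. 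Splitting $\mathbb{S}^{n-1}$ into $U$ and its complement, parametrising $U$ by $t=|x_0-\zeta|\in(0,\delta)$ with $dS\approx t^{n-2}\,dt\,d\omega$, and using $|g(\zeta)|\le Lt$ on $U$, the cap contributes at most a constant times
$$L\left[(1-r^2)^{\alpha}\int_0^{\delta}\frac{t^{n-1}\,dt}{\big((1-r)+t\big)^{n+\alpha}}+(1-r^2)^{1+\alpha}\int_0^{\delta}\frac{t^{n-1}\,dt}{\big((1-r)+t\big)^{n+\alpha+1}}\right],$$
and the substitution $t=(1-r)s$ turns the two integrals into $(1-r)^{-\alpha}\int_0^{\delta/(1-r)}\frac{s^{n-1}\,ds}{(1+s)^{n+\alpha}}$ and $(1-r)^{-1-\alpha}\int_0^{\delta/(1-r)}\frac{s^{n-1}\,ds}{(1+s)^{n+\alpha+1}}$. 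The improper integrals $\int_0^\infty\frac{s^{n-1}\,ds}{(1+s)^{n+\alpha}}$ and $\int_0^\infty\frac{s^{n-1}\,ds}{(1+s)^{n+\alpha+1}}$ converge precisely because $\alpha>0$ (for the second, already $\alpha>-1$ suffices), and the surviving prefactors are $(1-r^2)^{\alpha}(1-r)^{-\alpha}=(1+r)^{\alpha}$ and $(1-r^2)^{1+\alpha}(1-r)^{-1-\alpha}=(1+r)^{1+\alpha}$, both $\le2^{1+\alpha}$. On $\mathbb{S}^{n-1}\setminus U$ one has $|rx_0-\zeta|\ge c(\delta)>0$ once $r$ is close to $1$, so $|\nabla_xP_\alpha(rx_0,\zeta)|\lesssim(1-r^2)^{\alpha}\to0$, and $\|g\|_\infty\le2\|f\|_\infty$ makes this part bounded after integration over the sphere. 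Altogether $\big|\nabla_xP_\alpha[g](rx_0)\big|\le C(n,\alpha,\|f\|_\infty,L)$, hence $|h'(rx_0)|\le|f(x_0)|\,|h_1'(rx_0)|+\big|\nabla_xP_\alpha[g](rx_0)\big|\le C$.

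The one genuinely delicate point is the convergence of $\int_0^\infty s^{n-1}(1+s)^{-(n+\alpha)}\,ds$: this is exactly where $\alpha>0$ is used, and it explains why here the \emph{whole} differential $h'(rx_0)$ is controlled, whereas in the Euclidean case $\alpha=0$ of \cite{mss} the radial component of the derivative requires the extra quasiregularity hypothesis (condition S2). A subsidiary technical step is to pin down the boundary behaviour of $\phi'$, i.e. of $h_1=P_\alpha[1]$; this is routine once $h_1$ is identified with the hypergeometric function above, but it again relies on $\alpha>0$ to keep the relevant parameter difference $\alpha$ positive.
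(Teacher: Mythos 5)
Your argument is correct, and it is essentially the route this paper sets up, even though the theorem is only announced here without a written-out proof: the intended machinery is visibly Propositions \ref{prop2a} and \ref{prop3a}, and your cap estimate with $|g(\zeta)|\le L|\zeta-x_0|$ is exactly the case $\beta=1$ of $J^\beta_\alpha$ (applied once with exponent $n+\alpha$ and once with $n+\alpha+1$, i.e.\ with $\alpha$ replaced by $\alpha+1$), proved by the same polar-coordinate reduction and substitution $t=(1-r)s$ used in the proof of Proposition \ref{prop2a}; the source of boundedness of the full gradient, namely convergence of $\int_0^\infty s^{n-1}(1+s)^{-(n+\alpha)}ds$ precisely for $\alpha>0$, is identified correctly. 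The one ingredient you supply that the paper does not record, and which is genuinely needed, is the treatment of the normalization term: since $T_\alpha 1\neq 0$, one cannot use the harmonic-case cancellation $\int_{\mathbb{S}^{n-1}}\nabla_xP(x,\zeta)\,dS(\zeta)=0$, and a crude bound on $\nabla_x P_\alpha[1]$ gives only $O((1-r)^{-1})$; your identification of $P_\alpha[1]$ with the radial solution $F(-\tfrac{\alpha}{2},\tfrac{n-2-\alpha}{2};\tfrac n2;|x|^2)$ regular at the origin, and the Gauss evaluation showing its derivative (parameter difference $\alpha>0$) stays bounded up to $|x|=1$, fixes this. The only point to tighten is that this identification uses that the kernel $(1-|x|^2)^{1+\alpha}|x-\zeta|^{-(n+\alpha)}$ is itself $T_\alpha$-harmonic in $x$; this is not stated explicitly in the paper and should be justified by the relation $T_\alpha u=\tfrac{4}{1-|x|^2}\Delta_{\alpha/2}u$ together with the corresponding fact for $\Delta_\gamma$ from \cite{Liu2004}.
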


\begin{thm}
Suppose  that $f:\mathbb{S}^{n-1}\rightarrow \mathbb{R}^n$  is locally
$\beta$- H\"{o}llder  at $x_0 \in \mathbb{S}$, $0< \beta \leq 1$, $f\in
L^\infty(\mathbb{S}^{n-1})$   and $\,h\,=P_\alpha[f]$, $\alpha >0$,  is a  $T_{\alpha}$  harmonic
mapping from
$\mathbb{B}^n$.\\
Then

\begin{itemize}
  \item[{\rm S4)}]   $$|h'(r x_0)|\leq  C (1-r)^{\beta-1}  \eqno (3')$$
  for every  $0\leq r <1$.
\end{itemize}
In particular if   $\beta=1$,  $|h'(r x_0)|\leq  C$    for every  $0\leq r <1$.

If $f$  is Lip on $\mathbb{S}^{n-1}$, then  $h$  is Lip on  $\mathbb{B}^{n})$.
\end{thm}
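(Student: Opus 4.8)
The plan is to estimate $\nabla h$ directly from the Poisson‑type representation $h=P_\alpha[f]$. Since $h$ is $\IR^n$‑valued I first reduce to the scalar case: writing $h=(h_1,\dots,h_n)$ with $h_j=P_\alpha[f_j]$, one has $|h'(rx_0)|\le\big(\sum_j|\nabla h_j(rx_0)|^2\big)^{1/2}$, and each component $f_j$ inherits the $L^\infty$ bound and the local $\beta$‑H\"older condition at $x_0$, so it suffices to bound $|\nabla h(rx_0)|$ for scalar $f$. For $|x|<1$ the kernel $P_\alpha(x,\zeta)=c_\alpha(1-|x|^2)^{1+\alpha}|x-\zeta|^{-(n+\alpha)}$ and its $x$‑gradient are continuous in $(x,\zeta)$ on compact subsets of $\B^n\times\Sp^{n-1}$, so differentiation under the integral sign is legitimate and
$$\nabla h(x)=\int_{\Sp^{n-1}}f(\zeta)\,\nabla_x P_\alpha(x,\zeta)\,dS(\zeta).$$

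The crucial point is a cancellation. Put $\Phi:=P_\alpha[1]$; each $P_\alpha(\cdot,\zeta)$ is $T_\alpha$‑harmonic, hence $\Phi$ is radial and $T_\alpha$‑harmonic, and being finite at the origin it is a constant multiple of the hypergeometric solution $F\!\left(-\tfrac\alpha2,\tfrac{n-2-\alpha}2;\tfrac n2;|x|^2\right)$. Here $c-a-b=1+\alpha>0$, and the derivative formula $\tfrac{d}{dz}F(a,b;c;z)=\tfrac{ab}{c}F(a+1,b+1;c+1;z)$ has $(c+1)-(a+1)-(b+1)=\alpha>0$, so both $F$ and $F'$ extend continuously to $[0,1]$ (see \S15.4 of \cite{dlmf}); consequently $\Phi\in C^1(\overline{\B^n})$ and $|\nabla\Phi|\le C$ on $\overline{\B^n}$. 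Writing $f(\zeta)=f(x_0)+\big(f(\zeta)-f(x_0)\big)$ and using $\int_{\Sp^{n-1}}\nabla_x P_\alpha(x,\zeta)\,dS(\zeta)=\nabla\Phi(x)$ gives, for $x=rx_0$,
$$\nabla h(x)=\int_{\Sp^{n-1}}\big(f(\zeta)-f(x_0)\big)\nabla_x P_\alpha(x,\zeta)\,dS(\zeta)+f(x_0)\,\nabla\Phi(x),$$
where the second term is at most $C|f|_\infty\le C(1-r)^{\beta-1}$ since $\beta\le1$.

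For the first term I would use $|\nabla_x P_\alpha(x,\zeta)|\le C(1-|x|^2)^\alpha|x-\zeta|^{-(n+\alpha)}+C(1-|x|^2)^{1+\alpha}|x-\zeta|^{-(n+\alpha+1)}$ together with $|rx_0-\zeta|^2=(1-r)^2+r|x_0-\zeta|^2$, which for $r\ge\tfrac12$ is comparable to $(1-r)^2+|x_0-\zeta|^2$, and then split $\Sp^{n-1}$ into the cap $\{|\zeta-x_0|<\delta\}$ (with $\delta$ small enough that the local estimate $|f(\zeta)-f(x_0)|\le L|\zeta-x_0|^\beta$ holds there) and its complement. On the complement $|rx_0-\zeta|\ge\delta/\sqrt2$ and $|f(\zeta)-f(x_0)|\le2|f|_\infty$, so the contribution is $O((1-r)^\alpha)+O((1-r)^{1+\alpha})=O(1)$. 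On the cap, passing to polar coordinates about $x_0$ on $\Sp^{n-1}$ (so $dS\approx\omega_{n-2}t^{n-2}\,dt$ with $t=|\zeta-x_0|$) and substituting $t=(1-r)s$ turns each of the two pieces into $(1-r)^{\beta-1}$ times $\int_0^{\infty}\!\frac{s^{n-2+\beta}}{(1+s^2)^{(n+\alpha)/2}}\,ds$, respectively $\int_0^{\infty}\!\frac{s^{n-2+\beta}}{(1+s^2)^{(n+\alpha+1)/2}}\,ds$; both integrals converge because $\beta<1+\alpha$ and $\beta<2+\alpha$, which hold as $0<\beta\le1$ and $\alpha>0$. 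Hence $|\nabla h(rx_0)|\le C(1-r)^{\beta-1}$ for $r\in[\tfrac12,1)$, and for $r\in[0,\tfrac12]$ the bound is trivial since $h$ is smooth on $\tfrac12\overline{\B^n}$ and $(1-r)^{\beta-1}\ge1$ there; this proves S4. For the final assertion, if $f$ is Lipschitz on all of $\Sp^{n-1}$ then $\beta=1$ and the constants obtained above are uniform in $x_0$, so $|h'(x)|\le C$ for all $x\in\B^n$; since $\B^n$ is convex this forces $h$ to be Lipschitz on $\B^n$, hence on $\overline{\B^n}$. The main obstacle is the cancellation step: one must know that $\nabla\Phi$ is genuinely bounded up to the boundary and not merely $O((1-r)^{-1})$, and it is precisely there that the hypergeometric identification and the strict inequality $\alpha>0$ enter.
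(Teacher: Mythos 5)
Your argument is correct, and it follows the route the paper itself sets up but does not carry out: the theorem is only announced here, and the intended ingredients are precisely the kernel estimates of Propositions \ref{prop2a} and \ref{prop3a} (the bound $\int_{\mathbb{S}^{n-1}}|e_n-t|^{\beta}|x-t|^{-(n+\alpha)}d\sigma(t)\preceq (1-r)^{\beta-1-\alpha}$, hence $(1-r)^{\alpha}J_\alpha^\beta\preceq(1-r)^{\beta-1}$), which you rederive inside your cap estimate by exactly the same polar-coordinate argument and substitution $t=(1-r)s$ used in the paper's proof of Proposition \ref{prop2a}. What you add, and what the paper leaves implicit, is the treatment of the term coming from the fact that $P_\alpha[1]$ is not constant when $0<\alpha\neq n-2$: your identification of $P_\alpha[1]$ with a multiple of $F\left(-\tfrac{\alpha}{2},\tfrac{n-2-\alpha}{2};\tfrac{n}{2};|x|^2\right)$, whose derivative stays bounded on $[0,1]$ because the parametric excess of $F'$ is $\alpha>0$, is the right way to make the cancellation step legitimate, and it is indeed the only place where $\alpha>0$ is genuinely used. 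The one point you assert without proof is that each kernel $P_\alpha(\cdot,\zeta)$ is $T_\alpha$-harmonic, which is what lets you say $P_\alpha[1]$ is a radial solution of the hypergeometric ODE; this is standard (it is the Poisson kernel of \cite{Liu2004} under the paper's correspondence $T_\alpha=\tfrac{4}{1-|x|^2}\Delta_{\alpha/2}$, and is implicit in formula (\ref{DirRes})), but in a self-contained write-up you should cite or verify it, since the whole cancellation hinges on it. Apart from that, the reduction to components, the differentiation under the integral, the splitting into cap and complement, the case $r\le\tfrac12$, and the convexity argument for the final Lipschitz assertion are all sound.
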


For convenient of the reader we  prove:
\begin{prop}[\cite{mss,ArsMat}]\label{prop2a}
 Suppose  that $0 <\alpha<1$  and $x=re_n$, $0<r<1$. Then
$$I_\alpha(r e_n)=: \int_{\mathbb{S}^{n-1}} \frac{|e_n -t|^\alpha}{|x-t|^n}
d\sigma(t)\ \leq  \frac{c_{\alpha, n}}{(1-r)^{1-\alpha}}.
$$
\end{prop}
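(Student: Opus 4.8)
The plan is to reduce the surface integral $I_\alpha(re_n)$ to a one‑dimensional integral by integrating over spheres of constant distance from the pole $e_n$, and then to estimate that integral by splitting the range of integration at the scale $1-r$. First I would parametrize $t\in\mathbb{S}^{n-1}$ by the angle $\theta\in[0,\pi]$ between $t$ and $e_n$, so that $|e_n-t|=2\sin(\theta/2)$ and, by the law of cosines, $|x-t|^2=1+r^2-2r\cos\theta=(1-r)^2+2r(1-\cos\theta)=(1-r)^2+4r\sin^2(\theta/2)$. With $s=\sin(\theta/2)$ (so $s$ runs over $[0,1]$ and $d\sigma$ contributes a factor comparable to $s^{n-2}\,ds$ after absorbing the bounded Jacobian of the angular change of variables), the integral becomes, up to a dimensional constant,
$$
I_\alpha(re_n)\ \asymp\ \int_0^1 \frac{s^{\alpha}\,s^{n-2}}{\bigl((1-r)^2+4rs^2\bigr)^{n/2}}\,ds .
$$

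Next I would estimate this by the standard two‑regime split. On $0\le s\le 1-r$ the denominator is comparable to $(1-r)^n$, so that part of the integral is bounded by $(1-r)^{-n}\int_0^{1-r}s^{\alpha+n-2}\,ds\asymp (1-r)^{-n}(1-r)^{\alpha+n-1}=(1-r)^{\alpha-1}$. On $1-r\le s\le 1$ we may assume $r\ge 1/2$ (the case $r\le 1/2$ is trivial since the whole integrand is then bounded), so the denominator is comparable to $s^n$, and that part is bounded by $\int_{1-r}^{1}s^{\alpha+n-2-n}\,ds=\int_{1-r}^{1}s^{\alpha-2}\,ds$. Since $0<\alpha<1$ we have $\alpha-2<-1$, so this last integral is $\asymp (1-r)^{\alpha-1}$ as well (the upper endpoint contributes $O(1)$, which is dominated by $(1-r)^{\alpha-1}$). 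Combining the two pieces gives $I_\alpha(re_n)\le c_{\alpha,n}(1-r)^{\alpha-1}=c_{\alpha,n}/(1-r)^{1-\alpha}$, as claimed.

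The main obstacle — really the only point requiring care — is the passage from the surface integral over $\mathbb{S}^{n-1}$ to the clean one‑dimensional model integral: one must verify that the surface measure of the ``latitude'' band $\{t:\theta\le\vartheta\}$ behaves like $\vartheta^{n-1}$ for small $\vartheta$ (equivalently $d\sigma\asymp (\sin\theta)^{n-2}\,d\theta$), that $2\sin(\theta/2)$ and $\theta$ are comparable on $[0,\pi]$, and that the substitution $s=\sin(\theta/2)$ introduces only bounded, nonvanishing factors. All of this is routine spherical geometry, and once it is in place the endpoint conditions $0<\alpha<1$ are exactly what make both regimes of the split produce the same power $(1-r)^{\alpha-1}$; in particular $\alpha<1$ is needed so that the near‑pole piece is integrable against $s^{n-2}$ in the right way and $\alpha>0$ ensures the bound degrades (rather than improves) as $r\to 1$, consistent with the statement. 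Finally I would note that the same computation shows the exponent $1-\alpha$ is sharp, though that is not needed here.
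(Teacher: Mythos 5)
Your proposal is correct and follows essentially the same route as the paper: both reduce $I_\alpha(re_n)$ to a one-dimensional integral in the polar angle $\theta$ (using $|e_n-t|\asymp\theta$ and $|x-t|^2=(1-r)^2+4r\sin^2(\theta/2)$, after restricting to $r\ge 1/2$) and then extract the power $(1-r)^{\alpha-1}$, with the hypothesis $0<\alpha<1$ entering in exactly the same way. The only differences are cosmetic: the paper substitutes $\theta=(1-r)u$ and quotes convergence of $\int_0^\infty u^{\alpha+n-2}(1+u^2)^{-n/2}\,du$ where you instead split at the scale $1-r$, and your substitution $s=\sin(\theta/2)$ has an unbounded Jacobian near $\theta=\pi$ (harmless for the upper bound, since the integrand is bounded there, and avoided in the paper by keeping $\theta$ as the variable).
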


\begin{proof}
Since the integral is a continuous function of $0 \leq r < 1$, it suffices to prove the estimate under additional assumption
$1/2 \leq r < 1$.
The integrand depends only on the angle $\theta = \angle (t, e_n)$ so we can use integration in
polar coordinates on the sphere $\mathbb S^{n-1}$. This gives
\begin{eqnarray}\label{eqH4}
I_\alpha(r e_n) \leq c_n \int_0^\pi \frac {|\theta|^{n-2}
|\theta|^{\alpha}} {((1-r)^2+\frac{4r}{\pi^2}\theta^2)^{n/2}}\,d\theta < \\
\label{eqH5} c_n \int_0^{\infty}\frac {\theta^{\alpha +n-2}}
{\left((1-r)^2+\frac{4r}{\pi^2}\,
\theta^2\right)^{n/2}}\,d\theta\,.
\end{eqnarray}
Next using  $(1+\frac{4r}{\pi^2} u^2)^{-1} \leq C (1+u^2)^{-1}$  for  $\frac 12 \leq r <1$  and  a  change of variable  $\theta=(1-r) u$, we find
\begin{equation}
I_\alpha(r e_n) \leq C {(1-r)^{\alpha -1
}}\int_0^{\infty}\frac{u^{\alpha +n-2}}{(1+u^2)^{n/2}}\,d u\,.
\end{equation}





Since the above improper integral is convergent the proof is completed.
\end{proof}

Using similar  approach  if  $\omega$ is a majorant ? XXX one  can prove

\begin{prop}\label{Prop3}\cite{ArsMat} XXX
$$I_\omega(r e_n)=: \int_{\mathbb{S}^{n-1}} \frac{\omega(|e_n -t|)}{|x-t|^n}
d\sigma(t)\leq c\cdot \frac {\omega(1-r)} {1-r}.
$$
\end{prop}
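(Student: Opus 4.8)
The plan is to run the argument used for Proposition~\ref{prop2a} up to the point where the spherical integral is reduced to a one–dimensional one, and then to identify exactly which growth condition on $\omega$ forces that integral to be $O\bigl(\omega(1-r)/(1-r)\bigr)$. I will assume $\omega$ is nondecreasing on $[0,\pi]$ with $\omega(0+)=0$ and, crucially, that it is a \emph{regular} majorant in the sense that
\begin{equation*}
\int_{\delta}^{\pi}\frac{\omega(t)}{t^{2}}\,dt\ \le\ \frac{C_{\omega}\,\omega(\delta)}{\delta}\qquad\bigl(0<\delta\le\tfrac12\bigr).
\end{equation*}
As in Proposition~\ref{prop2a}, the map $r\mapsto I_\omega(re_n)$ is continuous on $[0,1)$, so it is enough to treat $\tfrac12\le r<1$; write $\delta:=1-r$.

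First I would reduce to a radial integral exactly as before. The integrand depends only on $\theta=\angle(t,e_n)$; the surface measure disintegrates as $d\sigma(t)=c_n(\sin\theta)^{n-2}\,d\theta\,d\sigma_{\mathbb S^{n-2}}$ with $(\sin\theta)^{n-2}\le\theta^{n-2}$, while $|e_n-t|=2\sin(\theta/2)\le\theta$ and $|x-t|^{2}=(1-r)^{2}+4r\sin^{2}(\theta/2)\ge(1-r)^{2}+\tfrac{4r}{\pi^{2}}\theta^{2}$. Since $\omega$ is nondecreasing, $\omega(|e_n-t|)\le\omega(\theta)$, hence
\begin{equation*}
I_\omega(re_n)\ \le\ c_n\int_{0}^{\pi}\frac{\omega(\theta)\,\theta^{n-2}}{\bigl((1-r)^{2}+\tfrac{4r}{\pi^{2}}\theta^{2}\bigr)^{n/2}}\,d\theta .
\end{equation*}
Then I split at $\theta=\delta$. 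On $[0,\delta]$ the denominator is $\ge\delta^{n}$ and $\omega(\theta)\le\omega(\delta)$, so this part is $\le\omega(\delta)\,\delta^{-n}\int_{0}^{\delta}\theta^{n-2}\,d\theta=\omega(\delta)/\bigl((n-1)\delta\bigr)$. On $[\delta,\pi]$, since $r\ge\tfrac12$ the denominator is $\ge(2/\pi^{2})^{n/2}\theta^{n}$, so this part is $\le c_n\int_{\delta}^{\pi}\omega(\theta)\theta^{-2}\,d\theta$, which by the regularity condition is $\le c_n\,C_\omega\,\omega(\delta)/\delta$. Summing the two pieces gives $I_\omega(re_n)\le c\,\omega(1-r)/(1-r)$.

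The only genuinely delicate point — presumably what the ``XXX'' flags — is the hypothesis on $\omega$: the near piece of the split uses only monotonicity, but the far piece really needs $\int_{\delta}^{\pi}\omega(t)t^{-2}\,dt=O\bigl(\omega(\delta)/\delta\bigr)$. This holds for $\omega(t)=t^{\beta}$ with $0<\beta<1$ (recovering Proposition~\ref{prop2a}) and for the usual regular majorants, but it is strictly stronger than asking only that $t\mapsto\omega(t)/t$ be nonincreasing — the latter yields merely $c\,\omega(1-r)\,\log\tfrac{1}{1-r}/(1-r)$. I would therefore state Proposition~\ref{Prop3} with the displayed integral condition included among the hypotheses on the majorant $\omega$.
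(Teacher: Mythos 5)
Your argument is correct and is exactly the route the paper intends: it leaves Proposition~\ref{Prop3} without proof (``using similar approach \dots one can prove'', with the hypothesis on $\omega$ left as ``XXX''), and your reduction to the radial integral is the same one used for Proposition~\ref{prop2a}, followed by the natural split at $\theta=1-r$. Your added observation is the genuinely useful part: the near piece needs only monotonicity of $\omega$, but the far piece requires the regularity condition $\int_{\delta}^{\pi}\omega(t)t^{-2}\,dt\le C_{\omega}\,\omega(\delta)/\delta$, and this cannot be dropped --- for the plain majorant $\omega(t)=t$ the integral $I_\omega(re_n)$ grows like $\log\frac{1}{1-r}$ while the claimed bound would be $O(1)$, so the proposition as stated is false without such a hypothesis. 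Stating Proposition~\ref{Prop3} with this Dini-type condition on $\omega$ (as in the cited work on regular majorants) is precisely the clarification the ``XXX'' calls for.
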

\begin{prop}\label{prop3a}
 Suppose  that $0 <\alpha,  0< \beta \leq 1$  and $x=re_n$, $0<r<1$. Then
$$J_\alpha^\beta(r e_n)=: \int_{\mathbb{S}^{n-1}} \frac{|e_n -t|^{\beta}}{|x-t|^{n+\alpha}}
d\sigma(t)\ \leq  \frac{c_{\alpha, n}}{(1-r)^{\beta -1-\alpha}}.
$$
and
$$I_\alpha^\beta(r e_n) = (1-r)^\alpha J_\alpha^\beta(r e_n)  \preceq (1-r)^{\beta-1 }$$
\end{prop}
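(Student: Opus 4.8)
The plan is to reduce the estimate for $J_\alpha^\beta$ directly to the computation already carried out in the proof of Proposition \ref{prop2a}. As there, the integrand depends only on the angle $\theta=\angle(t,e_n)$, so integration in polar coordinates on $\mathbb{S}^{n-1}$ gives
\[
J_\alpha^\beta(re_n)\leq c_n\int_0^\pi \frac{|\theta|^{n-2}\,|\theta|^\beta}{\bigl((1-r)^2+\tfrac{4r}{\pi^2}\theta^2\bigr)^{(n+\alpha)/2}}\,d\theta,
\]
where I have used $|e_n-t|\asymp\theta$ and $|x-t|^2\asymp (1-r)^2+\tfrac{4r}{\pi^2}\theta^2$ for $t$ at angle $\theta$, exactly the two elementary inequalities invoked in Proposition \ref{prop2a}. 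As before it suffices to treat $\tfrac12\leq r<1$, since the integral is continuous on $[0,1)$.

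Next I would extend the range of integration to $(0,\infty)$, apply the bound $(1+\tfrac{4r}{\pi^2}u^2)^{-1}\leq C(1+u^2)^{-1}$ valid for $\tfrac12\leq r<1$, and substitute $\theta=(1-r)u$. This yields
\[
J_\alpha^\beta(re_n)\leq C(1-r)^{\beta-1-\alpha}\int_0^\infty \frac{u^{\beta+n-2}}{(1+u^2)^{(n+\alpha)/2}}\,du,
\]
and the improper integral converges precisely because the integrand decays like $u^{\beta+n-2-(n+\alpha)}=u^{\beta-2-\alpha}$ at infinity, and $\beta-2-\alpha<-1$ is equivalent to $\alpha+1>\beta$, which holds since $\beta\leq 1$ and $\alpha>0$. (At $0$ the integrand is $u^{\beta+n-2}$, integrable since $\beta+n-2>-1$ for $n\geq 2$.) This gives the first displayed inequality. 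The second claim is then immediate: multiplying through by $(1-r)^\alpha$ gives $I_\alpha^\beta(re_n)=(1-r)^\alpha J_\alpha^\beta(re_n)\preceq (1-r)^{\beta-1}$.

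There is essentially no serious obstacle here; the only point requiring a little care is the convergence condition at infinity, i.e.\ checking that the exponent $\beta-1-\alpha$ is what makes the scaled integral finite, which is exactly the hypothesis $0<\beta\leq 1$, $\alpha>0$. One should also double-check the polar-coordinate comparison constants are uniform in $r\in[\tfrac12,1)$, but this is the same bookkeeping as in Proposition \ref{prop2a} and needs no new idea.
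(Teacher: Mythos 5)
Your argument is correct and is exactly the adaptation of the paper's proof of Proposition~\ref{prop2a} (polar coordinates on the sphere, the bounds $|e_n-t|\leq\theta$ and $\sin(\theta/2)\geq\theta/\pi$, the reduction to $\tfrac12\leq r<1$, and the substitution $\theta=(1-r)u$) that the paper intends here, including the one point that actually needs checking, namely convergence at infinity via $\beta<1+\alpha$. Note only that the estimate you derive, $J_\alpha^\beta(re_n)\preceq (1-r)^{\beta-1-\alpha}$, is the intended conclusion: in the proposition as printed the factor $(1-r)^{\beta-1-\alpha}$ should multiply $c_{\alpha,n}$ rather than divide it, as is confirmed by the second display $I_\alpha^\beta(re_n)=(1-r)^\alpha J_\alpha^\beta(re_n)\preceq(1-r)^{\beta-1}$, which you obtain correctly.
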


\begin{lemma}
    Let $D(r,\rho)=\int\limits_{\Sp^{n-1}}\frac{d S(\xi)}{[x,\rho\xi]^s}, |x|=r, 1/2<\rho,r<1.$ Then
    \begin{equation}\label{ocrna1}
        D(r,\rho)\sim
        \begin{cases}
            \frac{1}{(1-\rho)^{s-n+1}}, & s>n-1\\
            \log(1-\rho), & s=n-1,\\
            const, & s<n-1
        \end{cases}
    \rho\to 1^-.
    \end{equation}
    \end{lemma}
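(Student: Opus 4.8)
The plan is to reduce the spherical integral $D(r,\rho)$ to a one-dimensional integral via polar coordinates on $\Sp^{n-1}$, exactly as in the proof of Proposition \ref{prop2a}. Recall that $[x,\rho\xi]^2 = 1 + r^2\rho^2 - 2r\rho\langle x/r,\xi\rangle = (1-r\rho)^2 + 2r\rho(1-\cos\theta)$ where $\theta = \angle(x,\xi)$; since $1-\cos\theta \asymp \theta^2$ for $\theta\in[0,\pi]$ and $r\rho \asymp 1$ in the regime $1/2<\rho,r<1$, we get the two-sided comparison $[x,\rho\xi]^2 \asymp (1-r\rho)^2 + \theta^2$. Writing $d=1-r\rho$ (so $d\to 0^+$ when $\rho\to 1^-$, since $r$ stays bounded below, though $r$ may also vary; the key point is $1-r\rho \asymp (1-\rho)+(1-r)$, and for the stated asymptotics one treats $d$ as the small parameter) and integrating out the $(n-2)$-sphere of each latitude circle, we obtain
\[
D(r,\rho) \asymp \int_0^{\pi} \frac{\theta^{n-2}}{\big(d^2+\theta^2\big)^{s/2}}\,d\theta.
\]

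Next I would analyze this integral by the change of variable $\theta = d u$, which turns it into $d^{\,n-1-s}\int_0^{\pi/d} \frac{u^{n-2}}{(1+u^2)^{s/2}}\,du$. The behavior now depends on the exponent $u^{n-2-s}$ at infinity. If $s>n-1$, the integral $\int_0^\infty u^{n-2}(1+u^2)^{-s/2}\,du$ converges, so $D(r,\rho)\asymp d^{\,n-1-s} = (1-r\rho)^{-(s-n+1)}$, giving the first case. If $s=n-1$, the tail integrand is $\asymp u^{-1}$, so $\int_0^{\pi/d} \asymp \log(1/d)$, hence $D(r,\rho) \asymp \log(1/d) \asymp |\log(1-r\rho)|$ (the statement writes $\log(1-\rho)$, understood up to sign and up to the comparison $1-r\rho\asymp 1-\rho$ when $r$ is held fixed). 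If $s<n-1$, split $\int_0^\pi = \int_0^1 + \int_1^\pi$ in the original variable: the second piece is bounded and bounded away from $0$ uniformly, while $\int_0^1 \theta^{n-2}(d^2+\theta^2)^{-s/2}\,d\theta \to \int_0^1 \theta^{n-1-s}\,d\theta < \infty$ by dominated convergence, so $D(r,\rho)$ stays comparable to a positive constant.

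The main obstacle is bookkeeping the two-sided estimates carefully: one must justify the comparison $[x,\rho\xi]^2 \asymp (1-r\rho)^2+\theta^2$ with constants uniform over $1/2<\rho,r<1$ (this is where the lower bound $\rho,r>1/2$ is used, to keep $r\rho$ bounded below), and one must be careful about what plays the role of the ``small parameter'' — the cleanest formulation is in terms of $1-r\rho$, and then one invokes $1-r\rho \asymp (1-\rho)+(1-r)$; if $r$ is regarded as fixed while $\rho\to 1^-$, this is simply $1-r\rho\asymp 1-\rho$ and the statement reads as written. A secondary, more delicate point is the logarithmic case $s=n-1$: one should check that the lower cutoff contributes only an $O(1)$ term so that the $\log$ genuinely dominates, which follows since near $\theta=0$ the integrand $\theta^{n-2}(d^2+\theta^2)^{-(n-1)/2}$ is integrable uniformly in $d$. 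Apart from these uniformity checks, the argument is a routine scaling analysis of a single elementary integral.
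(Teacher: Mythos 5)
The paper states this lemma without any proof, so there is nothing to compare against line by line; your argument — reduce to a one--dimensional integral by spherical polar coordinates, use $[x,\rho\xi]^2=(1-r\rho)^2+2r\rho(1-\cos\theta)\asymp(1-r\rho)^2+\theta^2$ for $r,\rho>1/2$, then rescale $\theta=(1-r\rho)u$ and read off the three regimes $s>n-1$, $s=n-1$, $s<n-1$ — is exactly the natural route, and it is the same technique the paper itself uses in the proof of Proposition \ref{prop2a}. It is even cleaner if you note that $[x,\rho\xi]=|\rho x-\xi|$, so $D(r,\rho)$ is the classical surface integral $\int_{\Sp^{n-1}}|z-\xi|^{-s}\,dS(\xi)$ with $|z|=r\rho$, which makes transparent that only the product $r\rho$ matters. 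The minor points you flag (two-sided comparison of $\sin^{n-2}\theta$ with $\theta^{n-2}$, the $O(1)$ lower cutoff in the logarithmic case, the sign of the logarithm) are handled or handleable as you say.

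The one genuine slip is in your reconciliation with the statement as written: the parenthetical claim that ``if $r$ is regarded as fixed while $\rho\to1^-$, this is simply $1-r\rho\asymp1-\rho$'' is false. For fixed $r<1$ one has $1-r\rho\to 1-r>0$, and correspondingly $D(r,\rho)$ converges to the finite limit $\int_{\Sp^{n-1}}[x,\xi]^{-s}dS(\xi)$, so the lemma as literally stated (blow--up in powers of $1-\rho$ for fixed $|x|=r$) cannot hold. What your scaling argument actually proves is the correct version: $D(r,\rho)$ is comparable to $(1-r\rho)^{n-1-s}$, $\log\frac{1}{1-r\rho}$, or a constant, uniformly for $1/2<r,\rho<1$, with $1-r\rho\asymp(1-r)+(1-\rho)$; the displayed asymptotics in $1-\rho$ are recovered only in the regime $1-r\lesssim 1-\rho$ (for instance $r\ge\rho$, or taking a supremum over $r$). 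You should state this explicitly as a corrected formulation rather than assert the false equivalence for fixed $r$; with that repair the proof is complete.
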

    If  $u\in C^2(\mathbb{B}^{n})$  and  $T_{\alpha}u=0$  on  $\mathbb{B}^{n}$ we call  $u$   $T_{\alpha}$  harmonic.


Since

\section{App}

In \cite{ChenRas} the following result is also established:

\begin{theorem}\label{ChenRasThm}\cite[Theorem 1.2]{ChenRas}
  Let $n\geq 3$. Suppose that\vspace{-2mm}
  \begin{itemize}
    \item[(1)] $u\in C^2(\mathbb{B}^n,\mathbb{R}^n)\cap C(\overline{\mathbb{B}^n},\mathbb{R}^n)$ is of the form (\ref{RepHyp});

    \item[(2)] there is a constant $L\geq 0$ such that $|\phi(\xi)-\phi(\eta)|\leq L|\xi-\eta|$ for all $\xi,\eta\in\mathbb{S}^{n-1}$;

    \item[(3)] there is a constant $M\geq 0$ such that $|\psi(x)|\leq M(1-|x|^2)$ for all $x\in\mathbb{B}^n$.
  \end{itemize}
  Then, there is a constant $N=N(n,L,M)$ such that for $x,y\in\mathbb{B}^n$,
  $$|u(x)-u(y)|\leq N|x-y|,\vspace{-2mm}$$
  where the notation $N=N(n,L,M)$ means that the constant $N$ depends only on the quantities $n, L$ and $M$.
\end{theorem}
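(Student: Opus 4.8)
The plan is to imitate the argument used for the hyperbolic Laplacian in \cite{ChenRas} (cf.\ the representation in Theorem \ref{ChenRas1}): write $u=P_\alpha[\phi]+G_\alpha[\psi]$ as in \eqref{RepHyp}, show that each summand has gradient bounded on the open ball $\mathbb{B}^n$ — by $C(n,\alpha)L$ and by $C(n,\alpha)M$ respectively — and then, since $\mathbb{B}^n$ is convex and $u\in C(\overline{\mathbb{B}^n})$, integrate $\nabla u$ along the segment joining two interior points and pass to the boundary to obtain $|u(x)-u(y)|\le N|x-y|$ on $\overline{\mathbb{B}^n}$ with $N=N(n,\alpha,L,M)$.

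\emph{The Poisson term} is the easier one. That $P_\alpha[\phi]$ is Lipschitz on $\mathbb{B}^n$ with $|\nabla P_\alpha[\phi](x)|\le C(n,\alpha)L$ is the announced Lipschitz estimate for $P_\alpha[\phi]$ in the case of Lipschitz boundary data ($\beta=1$); it can also be obtained directly by the classical constant-subtraction device. With $x^*=x/|x|$ one writes $P_\alpha[\phi](x)=c_\alpha\int_{\mathbb{S}^{n-1}}(\phi(\zeta)-\phi(x^*))\frac{(1-|x|^2)^{1+\alpha}}{|x-\zeta|^{n+\alpha}}\,dS(\zeta)+\phi(x^*)\,P_\alpha[1](x)$ (the fixed function $P_\alpha[1]$ is smooth on $\overline{\mathbb{B}^n}$ by Liu--Peng's Theorem 1.1 \cite{Liu2004}, the constant data having a finite spherical harmonic expansion), differentiates, uses $\bigl|\nabla_x[(1-|x|^2)^{1+\alpha}|x-\zeta|^{-n-\alpha}]\bigr|\lesssim (1-|x|^2)^{\alpha}|x-\zeta|^{-n-\alpha}+(1-|x|^2)^{1+\alpha}|x-\zeta|^{-n-\alpha-1}$ together with $|\phi(\zeta)-\phi(x^*)|\le 2L|\zeta-x|$ and $1-|x|\le|x-\zeta|$, and invokes Propositions \ref{prop2a} and \ref{prop3a} (case $\beta=1$) to see that the remaining surface integrals are $O(1)$ uniformly in $x$.

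\emph{The Green term} is where the work lies. First one produces a two-sided estimate of the Green kernel from the M\"obius identities $|\varphi_x'(y)|=\frac{1-|x|^2}{[x,y]^2}$, $1-|\varphi_x(y)|^2=\frac{(1-|x|^2)(1-|y|^2)}{[x,y]^2}$, $|\varphi_x(y)|=\frac{|x-y|}{[x,y]}$ and the hypergeometric asymptotics recorded above, which give $G_\alpha(z)\asymp|z|^{2-n}(1-|z|^2)^{\alpha+1}$ on $\mathbb{B}^n$; substituting, $G_\alpha(x,y)$ is comparable to $(1-|x|^2)^{a}(1-|y|^2)^{\alpha+1}[x,y]^{-\alpha-2}|x-y|^{2-n}$ for a positive exponent $a$, the decisive features being the diagonal singularity $|x-y|^{2-n}$ and the boundary factor $(1-|y|^2)^{\alpha+1}$. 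Differentiating in $x$ and using $|x-y|\le[x,y]$, $1-|x|^2\le 2[x,y]$, $1-|y|^2\le 2[x,y]$ (immediate from $[x,y]^2=|x-y|^2+(1-|x|^2)(1-|y|^2)$ and $[x,y]\ge 1-|y|$), one gets $|\nabla_xG_\alpha(x,y)|\lesssim G_\alpha(x,y)\bigl(|x-y|^{-1}+[x,y]^{-1}+(1-|x|^2)^{-1}\bigr)$. It remains to bound, uniformly in $x\in\mathbb{B}^n$,
$$\int_{\mathbb{B}^n}|\psi(y)|\,|\nabla_xG_\alpha(x,y)|\,(1-|y|^2)^{-\alpha-1}\,dV(y)\;\le\;C_2(n,\alpha)\,M.$$
This is exactly where hypothesis (3) enters: the factor $1-|y|^2$ supplied by $|\psi(y)|\le M(1-|y|^2)$ cancels the surplus of the boundary weight $(1-|y|^2)^{-\alpha-1}$ against $(1-|y|^2)^{\alpha+1}$, leaving integrals of the form $\int_{\mathbb{B}^n}(1-|y|^2)\,(1-|x|^2)^{a}\,[x,y]^{-b}\,|x-y|^{-c}\,dV(y)$ with $c\in\{n-1,n-2\}$. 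One splits $\mathbb{B}^n$ into the pseudo-hyperbolic ball $E_x=\{|y-x|<\tfrac12(1-|x|^2)\}$, where $[x,y]\asymp 1-|y|^2\asymp 1-|x|^2$ so the integrand is $\asymp(1-|x|^2)^{\mathrm{const}}|x-y|^{-c}$ and the borderline singularity $|x-y|^{1-n}$ integrates over $E_x$ to a compensating positive power of $1-|x|^2$; and its complement, where $|x-y|\gtrsim 1-|x|^2$ and one integrates in polar coordinates $y=\rho\eta$, using the factor $1-\rho^2$ and the spherical-shell estimate \eqref{ocrna1}. Each piece is $O(M)$ uniformly in $x$, so $|\nabla G_\alpha[\psi](x)|\le C_2M$ on $\mathbb{B}^n$.

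Adding the two bounds gives $|\nabla u(x)|\le C_1L+C_2M$ on $\mathbb{B}^n$, and the Lipschitz estimate on $\overline{\mathbb{B}^n}$ follows from convexity and continuity. I expect the main obstacle to be precisely the uniform weighted-integral estimate for $\nabla_xG_\alpha(x,y)$: one must control at once the diagonal singularity $|x-y|^{1-n}$ (which is only borderline integrable) and the boundary blow-up of $(1-|y|^2)^{-\alpha-1}$, the latter being tamed exactly by hypothesis (3); the careful accounting of the powers of $[x,y]$ and $1-|x|^2$ on the two regions — and the verification that the admissible range of $\alpha$ keeps all resulting exponents non-negative — is the technical heart of the proof.
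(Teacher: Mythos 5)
You should first be aware that the paper itself offers no proof of this statement: it is quoted from Chen--Huang--Rasila--Wang \cite[Theorem 1.2]{ChenRas}, and the paper only records that their proof reduces to two separate facts, (A) that the Poisson part of $u$ is Lipschitz under hypothesis (2) and (B) that the Green part is Lipschitz under hypothesis (3). Your overall strategy --- bound the gradient of each summand on $\mathbb{B}^n$ and integrate along segments in the convex ball --- is exactly that decomposition, and your outline for the Green term (two-sided kernel estimate via the M\"obius identities, a gradient bound with the extra factors $|x-y|^{-1}$, $[x,y]^{-1}$, $(1-|x|^2)^{-1}$, and a split into the pseudo-hyperbolic ball and its complement) has the right shape.

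There is, however, a genuine gap in your Poisson-term step, caused by reading the theorem as a statement about the $T_\alpha$-kernels. The cited result concerns the hyperbolic Laplacian ($P_h$, $G_h$; in the $T_\alpha$ scale this is essentially $\alpha=n-2$, for which the zeroth-order coefficient $(n-2-\alpha)\alpha$ vanishes), and there the constant-subtraction device closes precisely because constants are $\Delta_h$-harmonic and $P_h[1]\equiv 1$. For general $\alpha$ one has $T_\alpha 1=(n-2-\alpha)\alpha\neq 0$, so $P_\alpha[1]$ is a genuinely non-constant (radial, smooth) function, and your subtraction leaves the term $\phi(x^*)\,\nabla P_\alpha[1](x)$, which is bounded only by $\|\phi\|_\infty\,C(n,\alpha)$ and not by $C(n,\alpha)L$: condition (2) controls the oscillation of $\phi$, not its size. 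Indeed, taking $\phi\equiv c$ with $c$ large and $\psi\equiv 0$ gives $u=c\,P_\alpha[1]$, whose Lipschitz constant grows with $c$ while $L=M=0$, so the conclusion with $N=N(n,L,M)$ is false in the $T_\alpha$ reading (and in any case $N$ would also have to depend on $\alpha$); the claimed bound $|\nabla P_\alpha[\phi]|\le C(n,\alpha)L$ cannot hold as stated. The fix is either to carry out your argument for the hyperbolic kernels $P_h$, $G_h$ as in \cite{ChenRas}, or, in the $T_\alpha$ setting, to accept an additional dependence on $\|\phi\|_\infty$. A smaller point: you differentiate the two-sided comparison $G_\alpha(z)\asymp |z|^{2-n}(1-|z|^2)^{\alpha+1}$, which is not legitimate by itself; gradient bounds for $G_\alpha(x,y)$ must come from an identity such as the contiguous-relation formula for $RG_\alpha$ recorded in the paper, after which your weighted-integral estimate proceeds as sketched.
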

In fact, they prove more general result:
\begin{itemize}
\item[(A)] If function $\phi$ satisfies $(1)$ then $\Phi=P_h[\phi]$ is Lipshitz and

\item[(B)] if function $\psi$ satisfies $(3)$ then $\Psi=G_h[\psi]$ is Lipshitz.
\end{itemize}
For further results  see \cite{MaMu}.

Here   denote
$$ I_\alpha(r)=\frac{1}{2\pi} \int_0^{2\pi} \frac{(1-|z|^2)^\alpha}{|1-\overline{z} e^{it}|^{\alpha+1}} dt,$$
where $r=|z|.$

In particular, $$ I_0(r)=\frac{1}{2\pi} \int_0^{2\pi} \frac{1}{|1-\overline{z} e^{it}|} dt.$$

Using
$A= |1-re^{it}|=\sqrt{(1-r)^2 +4r \sin^2(t/2)}$,  $\sin(t/2)\geq t/\pi $  and   change of variables   $t=(1-r)u$  one can show that
$ I_0(r)\approx -\log(1-r)$.

Set $B=  ((1-r)^2 + c (1-r)^2u^2)^{s/2}=(1-r)^s  ( 1 +cu^2)^{s/2} $and $s=\alpha+1$. Since    $dt=(1-r)du$,  $s-(s-1)-1$,  and  $s-(s-1)-1=0$,  $ I_\alpha(r)\approx C(r)$, where

 $C(r)=\int_0^{a(r)} 1 +cu^2)^{-s/2}du$  and  $a(r)=2\pi (1-r)^{-1}$. If  $s\leq 1$,    $C(r)\rightarrow  \infty$ and it is bounded if    $s> 1$.
Note that  $I_\alpha(r)$  is bounded for  $\alpha >0$  and therefore $\alpha$-harmonic     functions for $\alpha >0$   and   harmonic   functions have some different properties.
Here   	$I_{\alpha} (r)\approx (1-r^2)^\alpha$ ?  If $\alpha \leq -1$   then  $K_\alpha[1](z) \rightarrow \infty $ if $|z|\rightarrow 1$.
\emph{Dirichlet boundary value problem} of functions $1$  no  has solution if  $\alpha \leq -1$.

\end{document}